\def\smallddots{\mathinner{\raise7pt\hbox{.}\raise4pt\hbox{.}\raise1pt\hbox{.}}}
\def\smallsdots{\mathinner{\raise1pt\hbox{.}\raise4pt\hbox{.}\raise7pt\hbox{.}}}
\DeclareMathOperator{\diag}{diag}
\numberwithin{equation}{section}
\numberwithin{table}{section}
\newtheorem{theorem}{Theorem}[section]
\newtheorem{lemma}{Lemma}[section]
\newtheorem{fact}{Fact}[section]
\newtheorem{definition}{Definition}[section]
\newtheorem{remark}{Remark}[section]
\begin{document}

\title{\bf Condition Numbers of
Random  Toeplitz and Circulant Matrices
\thanks {The results of this paper have been presented at the 
 ACM-SIGSAM International 
Symposium on Symbolic and Algebraic Computation (ISSAC '2011), San Jose, CA, 2011,
the
3nd International Conference on Matrix Methods in Mathematics and 
Applications (MMMA 2011) in
Moscow, Russia, June 22-25, 2011, 
the 7th International Congress on Industrial and Applied Mathematics 
(ICIAM 2011), in Vancouver, British Columbia, Canada, July 18-22, 2011,
the SIAM International Conference on Linear Algebra,
in Valencia, Spain, June 18-22, 2012, and 
the Conference on Structured Linear and Multilinear Algebra Problems (SLA2012),
in  Leuven, Belgium, September 10-14, 2012}}

\author{Victor Y. Pan$^{[1, 2],[a]}$ and Guoliang Qian$^{[2],[b]}$
\and\\
$^{[1]}$ Department of Mathematics and Computer Science \\
Lehman College of the City University of New York \\
Bronx, NY 10468 USA \\
$^{[2]}$ Ph.D. Programs in Mathematics  and Computer Science \\
The Graduate Center of the City University of New York \\
New York, NY 10036 USA \\
$^{[a]}$ victor.pan@lehman.cuny.edu \\
http://comet.lehman.cuny.edu/vpan/  \\
$^{[b]}$ gqian@gc.cuny.edu \\
} 
 \date{}

\maketitle


\begin{abstract}
Estimating the condition numbers of random structured matrices
is a well known challenge (cf.  \cite{SST06}),
linked to the design  of efficient randomized matrix algorithms 
in \cite{PGMQ},  \cite{PIMR10},  \cite{PQ10},  \cite{PQ12},  
\cite{PQZa},  \cite{PQa},
  \cite{PQZb},   \cite{PQZC},  \cite{PY09}.
We deduce such estimates for Gaussian random Toeplitz and circulant
matrices.
The former estimates can be surprising because
the condition numbers
 grow exponentially in $n$ as $n\rightarrow \infty$
for some large and important classes
of  $n\times n$ 
Toeplitz matrices \cite{BG05}, 
whereas we prove the opposit for 
Gaussian random Toeplitz matrices.
Our formal estimates are in good accordance 
with our numerical tests,
except that circulant matrices tend to be even
better conditioned according to the tests than according to our formal
study.  
\end{abstract}

\paragraph{\bf 2000 Math. Subject Classification:}
 15A52, 15A12, 65F22, 65F35

\paragraph{\bf Key Words:}
Condition numbers,
Random matrices, Toeplitz matrices, Circulant matrices



\section{Introduction}\label{sintro}


It is well known that random matrices tend to be well conditioned
 \cite{D88},  \cite{E88},  \cite{ES05},
\cite{CD05}, \cite{SST06}, \cite{B11}, 
and this property can be exploited for advancing
matrix computations (see e.g., 
\cite{PGMQ},  \cite{PIMR10},  \cite{PQ10},  \cite{PQ12},  
\cite{PQa},  \cite{PQZa},
  \cite{PQZb},   \cite{PQZC}
\cite{PY09}).
Exploiting matrix structure in these
applications was supported empirically in the latter papers 
and formally in \cite{T11}. An important step 
in this direction is the estimation of the condition
numbers of  
structured 
matrices 
 stated as a challenge in  \cite{SST06}.
We reply to this challenge 
by  estimating the condition numbers 
of Gaussian random Toeplitz and circulant
matrices both formally 
 (see Sections \ref{scgrtm} and \ref{scgrcm})
and experimentally
(see Tables \ref{nonsymtoeplitz}--\ref{tabcondcirc}).
Our study shows that 
Gaussian random Toeplitz  circulant
matrices do not tend to be ill conditioned
and the condition numbers of
Gaussian random circulant  $n\times n$
matrices tend to grow extremely slow as $n$ grows large.
Our numerical tests
(the contribution of the second author)
 are in good accordance 
with our formal estimates,
except that circulant matrices tended to be even
better conditioned in the tests than according to our formal
study.  
Our results on Toeplitz matrices are
quite surprising because
the condition numbers
 grow exponentially in $n$ as $n\rightarrow \infty$
for some large and important classes
of  $n\times n$ 
Toeplitz matrices \cite{BG05}, 
which is opposit  to the behavior of 
Gaussian random Toeplitz $n\times n$ matrices
as we proved and consistently observed in our tests.
Clearly, our study of  
Toeplitz matrices can be equally applied to Hankel 
 matrices. 


We organize our paper as follows.
We recall some definitions and basic results on general matrix computations
in the next 
section and on Toeplitz, Hankel and circulant matrices in Section \ref{stplc}.
We define Gaussian random matrices and study their
ranks and extremal singular values in Section \ref{srvrm}.
In Sections \ref{scgrtm} and \ref{scgrcm} we extend this study 
to Gaussian random 
 Toeplitz and  circulant  matrices, respectively.
 In Section \ref{sexp} we cover numerical tests,
which constitute the contribution of the second author.
In Section \ref{simprel} we recall some applications
of random circulant and  Toeplitz matrices, which
provide some implicit empirical
 support for our estimates for their condition 
numbers.
We end with conclusions in Section \ref{srel}.



\section{Some definitions and basic results}\label{sdef}


Except for Theorem \ref{thcpw}   
and its application in the proof of Theorem \ref{thcircsing}
we work in the field $\mathbb R$ of real numbers.
Next we recall some customary definitions of matrix computations
\cite{GL96}, \cite{S98}.
$A^T$ is the transpose  of a 
matrix $A$.
$||A||_h$ is its $h$-norm for 
$h=1,2,\infty$. We write $||A||$ to denote the 2-norm $||A||_2$.
We have
\begin{equation}\label{eqnorm12}
\frac{1}{\sqrt m}||A||_1\le||A||\le \sqrt n ||A||_1,~~||A||_1=||A^T||_{\infty},~~
||A||^2\le||A||_1||A||_{\infty}, 
\end{equation}
for an $m\times n$ matrix $A$,
\begin{equation}\label{eqnorm12inf}
||AB||_h\le ||A||_h||B||_h~{\rm for}~h=1,2,\infty~{\rm and~any~matrix~product}~AB.
\end{equation}



Define an {\em SVD} or {\em full SVD} of an $m\times n$ matrix $A$ of a rank 
 $\rho$ as follows,
\begin{equation}\label{eqsvd}
A=S_A\Sigma_AT_A^T.
\end{equation}
Here
$S_AS_A^T=S_A^TS_A=I_m$, $T_AT_A^T=T_A^TT_A=I_n$,
$\Sigma_A=\diag(\widehat \Sigma_A,O_{m-\rho,n-\rho})$, 
$\widehat \Sigma_A=\diag(\sigma_j(A))_{j=1}^{\rho}$,
$\sigma_j=\sigma_j(A)=\sigma_j(A^T)$
is the $j$th largest singular value of a matrix $A$
 for $j=1,\dots,\rho$, and we write
$\sigma_j=0$ for $j>\rho$. These values have 
the minimax property  
\begin{equation}\label{eqminmax}
\sigma_j=\max_{{\rm dim} (\mathbb S)=j}~~\min_{{\bf x}\in \mathbb S,~||{\bf x}||=1}~~~||A{\bf x}||,~j=1,\dots,\rho,
\end{equation}
where $\mathbb S$ denotes linear spaces  \cite[Theorem 8.6.1]{GL96}.

 
\begin{fact}\label{faccondsub} 
If $A_0$ is a 
submatrix of a 
matrix $A$, 
then
$\sigma_{j} (A)\ge \sigma_{j} (A_0)$ for all $j$.
\end{fact}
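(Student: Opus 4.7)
The plan is to prove the inequality by using the minimax characterization (\ref{eqminmax}) and reducing the general case of submatrix extraction to two basic operations: deleting a set of columns and deleting a set of rows. Any submatrix $A_0$ of $A$ is obtained from $A$ by composing these two operations, so it suffices to handle each one separately and then combine.

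For column deletion, suppose $A_0$ consists of the columns of $A$ indexed by a subset $J\subseteq\{1,\dots,n\}$. Any vector ${\bf x}_0$ in the domain of $A_0$ lifts to a vector ${\bf x}$ in the domain of $A$ by inserting zeros in the coordinates outside $J$, and this lift preserves the 2-norm and satisfies $A{\bf x}=A_0{\bf x}_0$. Consequently any $j$-dimensional subspace $\mathbb S_0$ of the domain of $A_0$ lifts to a $j$-dimensional subspace $\mathbb S$ of the domain of $A$ on which the minimum of $\|A{\bf x}\|$ over unit vectors equals the minimum of $\|A_0{\bf x}_0\|$ over unit vectors in $\mathbb S_0$. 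Applying (\ref{eqminmax}) then gives $\sigma_j(A_0)\le\sigma_j(A)$.

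For row deletion, suppose $A_0$ consists of the rows of $A$ indexed by a subset $I\subseteq\{1,\dots,m\}$. Then for every vector ${\bf x}$ in the (common) domain, $\|A_0{\bf x}\|^2$ is a partial sum of the squared coordinates of $A{\bf x}$, so $\|A_0{\bf x}\|\le\|A{\bf x}\|$ pointwise. Hence for any subspace $\mathbb S$ of dimension $j$ we have $\min_{{\bf x}\in\mathbb S,\|{\bf x}\|=1}\|A_0{\bf x}\|\le\min_{{\bf x}\in\mathbb S,\|{\bf x}\|=1}\|A{\bf x}\|$, and taking the maximum over all such $\mathbb S$ in (\ref{eqminmax}) yields $\sigma_j(A_0)\le\sigma_j(A)$. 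Composing this with column deletion (or invoking $\sigma_j(B)=\sigma_j(B^T)$ to reduce one case to the other) proves the claim for an arbitrary submatrix.

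I do not expect a serious obstacle; the only care needed is to note that the inequality is trivial when $j$ exceeds the rank of $A_0$, by the convention $\sigma_j=0$ for $j$ beyond the rank, so the minimax formula (\ref{eqminmax}) applies in the nontrivial range $j\le\rank(A_0)$ and the extension to all $j$ is immediate.
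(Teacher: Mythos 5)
Your proof is correct and takes essentially the same route as the paper's: reduce an arbitrary submatrix to the composition of a column deletion and a row deletion, and control each step by the minimax characterization (\ref{eqminmax}) of singular values. The only difference is one of packaging --- the paper delegates the column-block case to \cite[Corollary 8.6.3]{GL96} and disposes of rows via transposition and permutation invariance, whereas you derive the column case directly by zero-padding lifts and the row case by the pointwise inequality $||A_0{\bf x}||\le||A{\bf x}||$, yielding a self-contained but mathematically equivalent argument.
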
 

 
\begin{proof}
\cite[Corollary 8.6.3]{GL96} implies 
the claimed bound
where $A_0$ is any block of columns of 
the matrix $A$. Transposition of a matrix and permutations 
of its rows and columns do not change singular values,
and thus we can extend the bounds to
all submatrices $A_0$.
\end{proof}


$A^+=T_A\diag(\widehat \Sigma_A^{-1},O_{n-\rho,m-\rho})S_A^T$ is the Moore--Penrose 
pseudo-inverse of the matrix $A$ of (\ref{eqsvd}), and
\begin{equation}\label{eqnrm+}
||A^+||=1/\sigma{_\rho}(A)
\end{equation}
 for 
a matrix $A$ of a rank $\rho$.  


$\kappa (A)=\frac{\sigma_1(A)}{\sigma_{\rho}(A)}=||A||~||A^+||$ is the condition 
number of an $m\times n$ matrix $A$ of a rank $\rho$. Such matrix is {\em ill conditioned} 
if $\sigma_1(A)\gg\sigma_{\rho}(A)$ and is {\em well conditioned}
otherwise. See \cite{D83}, \cite[Sections 2.3.2, 2.3.3, 3.5.4, 12.5]{GL96}, 
\cite[Chapter 15]{H02}, \cite{KL94}, and \cite[Section 5.3]{S98} 
on the estimation of the norms and condition numbers 
of nonsingular matrices. 


\section{Toeplitz, Hankel and $f$-circulant 
matrices}\label{stplc}


A {\em Toep\-litz} $m\times n$ matrix $T_{m,n}=(t_{i-j})_{i,j=1}^{m,n}$ 
(resp. Hankel matrices $H=(h_{i+j})_{i,j=1}^{m,n}$) 
is defined by its first row and first (resp. last) column, that is by
the vector $(t_h)_{h=1-n}^{m-1}$ (resp. $(h_g)_{g=2}^{m+n}$)
of dimension $m+n-1$. We write $T_n=T_{n,n}=(t_{i-j})_{i,j=1}^{n,n}$ 
(see equation (\ref{eqtz}) below).

${\bf e}_i$ is the $i$th coordinate vector of dimension $n$ for
$i=1,\dots,n$. The
  reflection matrix 
$J=J_n({\bf e}_n~|~\dots~|~{\bf e}_1)$ is the Hankel $n\times n$ matrix
defined by its first column ${\bf e}_n$ and its last column 
${\bf e}_1$. We have $J=J^T=J^{-1}$.

A lower {\em triangular Toep\-litz}  $n\times n$ matrix $Z({\bf t})=(t_{i-j})_{i,j=1}^n$
(where $t_k=0$ for $k<0$)
is defined by its first column ${\bf t}=(t_h)_{h=0}^{n-1}$. 
We write $Z({\bf t})^T=(Z({\bf t}))^T$.
$Z=Z_0=Z({\bf e}_2)$
is the downshift $n\times n$ matrix
 (see (\ref{eqtz})). We have 
$Z{\bf v}=(v_i)_{i=0}^{n-1}$ and
$Z({\bf v})=Z_0({\bf v})=\sum_{i=1}^{n}v_{i}Z^{i-1}$
for  ${\bf v}=(v_i)_{i=1}^n$ and $v_0=0$,
\begin{equation}\label{eqtz}
T_n=\begin{pmatrix}t_0&t_{-1}&\cdots&t_{1-n}\\ t_1&t_0&\smallddots&\vdots\\ \vdots&\smallddots&\smallddots&t_{-1}\\ t_{n-1}&\cdots&t_1&t_0\end{pmatrix},~Z=\begin{pmatrix}
        0   &       &   \dots    &   & 0\\
        1   & \ddots    &       &   & \\
        \vdots     & \ddots    & \ddots    &   & \vdots    \\
            &       & \ddots    & 0 &  \\
        0    &       &  \dots      & 1 & 0 
    \end{pmatrix}, ~Z_f=\begin{pmatrix}
        0   &       &   \dots    &   & f\\
        1   & \ddots    &       &   & \\
        \vdots     & \ddots    & \ddots    &   & \vdots    \\
            &       & \ddots    & 0 &  \\
        0    &       &  \dots      & 1 & 0 
    \end{pmatrix}.
\end{equation}
Combine the equations $||Z({\bf v})||_1=||Z({\bf v})||_{\infty}=||{\bf v}||_1$
 with  (\ref{eqnorm12}) to obtain 
\begin{equation}\label{eqttn}
||Z({\bf v})||\le ||{\bf v}||_1.
\end{equation}



\begin{theorem}\label{thgs}
Write $T_{k}=(t_{i-j})_{i,j=0}^{k-1}$ for  $k=n,n+1$. 

(a) Let the matrix $T_n$ be nonsingular and write 
${\bf p}=T_n^{-1}{\bf e}_1$ and ${\bf q}=T_n^{-1}{\bf e}_{n}$.
If
$p_{1}={\bf e}_1^T{\bf p}\neq 0$,
then
$p_{1}T_n^{-1}=Z({\bf p})Z(J{\bf q})^T-Z(Z{\bf q})Z(ZJ{\bf p})^T.$

In parts (b) and (c) below let the matrix $T_{n+1}$ be nonsingular and write 
$\widehat {\bf v}=(v_i)_{i=0}^n=T_{n+1}^{-1}{\bf e}_1$,
${\bf v}=(v_i)_{i=0}^{n-1}$, ${\bf v}'=(v_i)_{i=1}^{n}$,
$\widehat {\bf w}=(w_i)_{i=0}^n=T_{n+1}^{-1}{\bf e}_{n+1}$, 
${\bf w}=(w_i)_{i=0}^{n-1}$, and ${\bf w}'=(w_i)_{i=1}^{n}$.

(b) If $v_0\neq 0$, then the matrix $T_n$ is nonsingular and
$v_0T_n^{-1}=Z({\bf v})Z(J{\bf w'})^T-Z({\bf w})Z(J{\bf v}')^T$.

(c) If $v_n\neq 0$, then the matrix $T_{1,0}=(t_{i-j})_{i=1,j=0}^{n,n-1}$ is nonsingular and
$v_nT_{1,0}^{-1}=Z({\bf w})Z(J{\bf v'})^T-Z({\bf v})Z(J{\bf w}')^T$.
\end{theorem}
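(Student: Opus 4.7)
The plan is to prove part (a) by the standard displacement-rank / Gohberg--Semencul machinery and then to deduce parts (b) and (c) by relating $T_n^{-1}$ and $T_{1,0}^{-1}$ to $T_{n+1}^{-1}$ via one step of block inversion.

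For part (a), introduce the Stein-type displacement operator $\Delta(M) := M - ZMZ^T$. Since $Z^n = O$, telescoping gives $M = \sum_{k=0}^{n-1} Z^k \Delta(M)(Z^T)^k$, so that a rank-two factorization $\Delta(M) = {\bf u}_1 {\bf v}_1^T + {\bf u}_2 {\bf v}_2^T$ reconstructs $M$ uniquely as $Z({\bf u}_1)Z({\bf v}_1)^T + Z({\bf u}_2)Z({\bf v}_2)^T$. The task then reduces to verifying the rank-two displacement identity
\[
p_1 \Delta(T_n^{-1}) = {\bf p}(J{\bf q})^T - (Z{\bf q})(ZJ{\bf p})^T.
\]
To do this I would start from the observation that $\Delta(T_n)$ is supported on the first row and first column of $T_n$ and hence has rank at most two, and transfer the displacement through the inverse via $T_n \Delta(T_n^{-1}) T_n = T_n - T_n Z T_n^{-1} Z^T T_n$, simplifying with $T_n{\bf p} = {\bf e}_1$, $T_n{\bf q} = {\bf e}_n$, and the Toeplitz $J$-symmetry $J T_n J = T_n^T$ (which implies $(J{\bf q})^T T_n = {\bf e}_1^T$ and $(J{\bf p})^T T_n = {\bf e}_n^T$). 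The reconstruction formula then produces the Gohberg--Semencul identity of (a).

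For parts (b) and (c), observe that $T_n$ is the leading $n\times n$ principal submatrix of $T_{n+1}$, while $T_{1,0}$ is the $n\times n$ Toeplitz submatrix of $T_{n+1}$ obtained by deleting the first row and the last column. A single application of the block inversion / Schur complement formula to $T_{n+1}$ expresses the first and last columns of $T_n^{-1}$ (respectively of $T_{1,0}^{-1}$) as explicit linear combinations of the truncations ${\bf v}, {\bf v}', {\bf w}, {\bf w}'$ of the two columns $\widehat{{\bf v}}, \widehat{{\bf w}}$ of $T_{n+1}^{-1}$, with the leading scalar $v_0$ in case (b) and $v_n$ in case (c). The hypotheses $v_0 \neq 0$ and $v_n \neq 0$ thus simultaneously give the stated nonsingularity and provide the vectors that play the role of ${\bf p}, {\bf q}$ in part (a); substituting them into (a) and clearing denominators yields (b) and (c).

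The main technical hurdle is the index-heavy bookkeeping of the shifts $Z$ and the reflection $J$ needed to identify the rank-two decomposition of $\Delta(T_n^{-1})$ with the exact vectors $Z({\bf p}), Z(J{\bf q}), Z(Z{\bf q}), Z(ZJ{\bf p})$ — in particular, the minus sign and the extra $Z$-factors attached to ${\bf q}$ and $J{\bf p}$ come from the non-commutativity of $Z$ and $Z^T$ (namely $Z^TZ = I - {\bf e}_n{\bf e}_n^T$) and must be tracked carefully. As a self-contained alternative that bypasses the displacement calculus, one can instead multiply $T_n$ by the claimed right-hand side of (a) and simplify directly to $p_1 I$, using only $T_n{\bf p} = {\bf e}_1$, $T_n{\bf q} = {\bf e}_n$, the commuting relation $Z \cdot Z({\bf u}) = Z({\bf u}) \cdot Z$ for lower-triangular Toeplitz matrices, and $JZJ = Z^T$; this serves both as a sanity check and as an independent proof of (a).
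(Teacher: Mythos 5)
First, a point of reference: the paper does not actually prove this theorem --- it is the classical Gohberg--Semencul / Gohberg--Krupnik inversion formula, and the paper's ``proof'' is only a citation to \cite{GS72} and \cite{GK72}. Your displacement-operator plan is therefore necessarily a different route, and it is the standard modern one. The reduction you set up is sound: since $Z^n=O$, the operator $\Delta(M)=M-ZMZ^T$ is injective with explicit inverse $M=\sum_{k=0}^{n-1}Z^k\Delta(M)(Z^T)^k$, and $\Delta\bigl(Z({\bf u})Z({\bf v})^T\bigr)={\bf u}{\bf v}^T$ (using $ZZ({\bf u})=Z({\bf u})Z$ and $ZZ^T=I-{\bf e}_1{\bf e}_1^T$), so part (a) is exactly equivalent to the displacement identity $p_1\Delta(T_n^{-1})={\bf p}(J{\bf q})^T-(Z{\bf q})(ZJ{\bf p})^T$. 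That much is correct and is a genuine improvement over a bare citation.

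There are, however, two concrete gaps. (i) The displacement identity is the entire content of part (a), and the verification you sketch does not close with the ingredients you list: expanding $T_n\Delta(T_n^{-1})T_n=T_n-T_nZT_n^{-1}Z^TT_n$ leaves a factor $T_n^{-1}$ trapped between $T_nZ$ and $Z^TT_n$, and none of $T_n{\bf p}={\bf e}_1$, $T_n{\bf q}={\bf e}_n$, $JT_nJ=T_n^T$ reaches it. You need, in addition, the rank-two structure of the Toeplitz commutation error (e.g.\ $T_nZ-ZT_n$ is supported on the first row and last column), or, more cleanly, you should work with the Sylvester-type operator $\nabla(M)=Z^TM-MZ^T$, for which $\nabla(T_n^{-1})=-T_n^{-1}\nabla(T_n)T_n^{-1}$ holds exactly and $\nabla(T_n)$ has explicit rank-two generators; persymmetry then turns $T_n^{-T}$ applied to those generators into $J{\bf p}$ and $J{\bf q}$. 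The same objection applies to your ``self-contained alternative'': $T_nZ({\bf p})$ does not simplify from $T_n{\bf p}={\bf e}_1$ alone, because $T_n$ does not commute with $Z$. (ii) For parts (b) and (c) the bordering idea correctly yields the nonsingularity claims (via $v_0=\det T_n/\det T_{n+1}$ and $v_n=\pm\det T_{1,0}/\det T_{n+1}$), but ``substitute into (a) and clear denominators'' is asserted rather than proved: Schur complementation gives ${\bf p}={\bf v}-(v_n/w_n){\bf w}$ and ${\bf q}={\bf w}'-(w_0/v_0){\bf v}'$, so each of ${\bf p},{\bf q}$ is a combination of two of your vectors, and inserting these into (a) produces a four-generator expression that must still be shown to collapse to the stated two-term formulas; moreover $T_{1,0}$ is not a principal submatrix of $T_{n+1}$, so the standard block-inversion formula does not apply to it directly. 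All of these steps are doable (they are carried out in \cite{GS72} and \cite{GK72}), but as written your proposal names the right machinery without executing the parts where the theorem actually lives.
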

\begin{proof}
 See \cite{GS72} on parts (a) and (b);  see \cite{GK72} on part (c).
\end{proof}

$Z_f=Z+f{\bf e}_1^T{\bf e}_n$ for a scalar $f\neq 0$
denotes the
$n\times n$ matrix of
 $f$-{\em circular shift} (see (\ref{eqtz})).
An $f$-{\em circulant matrix} $Z_f({\bf v})=\sum_{i=1}^{n}v_iZ_f^{i-1}$ 
is a special Toep\-litz $n\times n$ matrix defined by its first column vector 
${\bf v}=(v_i)_{i=1}^{n}$ and a scalar $f$. 
$f$-circulant matrix is called {\em circulant} if $f=1$ and {\em skew circulant} if $f=-1$.
By replacing $f$ with $0$ we arrive at a lower triangular 
Toep\-litz matrix $Z({\bf v})$.
The following theorem implies that the inverses 
 (wherever they are defined) and pairwise  products of  
$f$-circulant  $n\times n$ matrices are $f$-circulant  and can be computed 
in $O(n\log n)$ flops. 

\begin{theorem}\label{thcpw} (See \cite{CPW74}.)
We have 
$Z_1({\bf v})=\Omega^{-1}D(\Omega{\bf v})\Omega.$
More generally, for any $f\ne 0$, we have
$Z_{f^n}({\bf v})=U_f^{-1}D(U_f{\bf v})U_f$
where
$U_f=\Omega D({\bf f}),~~{\bf f}=(f^i)_{i=0}^{n-1}$,
$D({\bf u})=\diag(u_i)_{i=0}^{n-1}$ for a vector ${\bf u}=(u_i)_{i=0}^{n-1}$,  
$\Omega=(\omega_n^{ij})_{i,j=0}^{n-1}$ is the $n\times n$ matrix of the 
discrete Fourier transform at $n$ points, 
$\omega_n={\rm exp}(\frac{2\pi}{n}\sqrt{-1})$ being 
a primitive $n$-th root of $1$, and $\Omega^{-1}=\frac{1}{n}(\omega_n^{-ij})_{i,j=0}^{n-1}=\frac{1}{n}\Omega^H$.
\end{theorem}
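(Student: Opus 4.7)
My plan is to establish the special case $f=1$ first and then reduce the general statement to it via a diagonal conjugation. The entire argument rests on two facts: the rows of $\Omega$ diagonalize the cyclic shift $Z_1$, and any polynomial in a matrix shares the eigenvectors of that matrix.

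For $f=1$, I would first verify by direct entry-wise computation that if $\boldsymbol{\omega}=(\omega_n^i)_{i=0}^{n-1}$, then
\[
\Omega Z_1=D(\boldsymbol{\omega})\,\Omega,\qquad\text{equivalently}\qquad Z_1=\Omega^{-1}D(\boldsymbol{\omega})\Omega.
\]
This amounts to checking that each row $(\omega_n^{ij})_{j=0}^{n-1}$ of $\Omega$ is a left eigenvector of $Z_1$ with eigenvalue $\omega_n^i$, which uses only the definitions of $Z_1$ and $\Omega$ together with $\omega_n^n=1$. Since powers of $Z_1$ are simultaneously diagonalized by $\Omega$,
\[
Z_1(\mathbf{v})=\sum_{i=1}^{n}v_iZ_1^{i-1}=\Omega^{-1}\Bigl(\sum_{i=1}^{n}v_iD(\boldsymbol{\omega})^{i-1}\Bigr)\Omega,
\]
and the $j$th diagonal entry of the bracketed matrix equals $\sum_{i=1}^{n}v_i\omega_n^{j(i-1)}=(\Omega\mathbf{v})_j$, yielding the first identity of the theorem.

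For arbitrary $f\ne 0$, I would reduce to the previous case by a diagonal scaling. A short computation using $(D(\mathbf{f})^{-1}Z_1D(\mathbf{f}))_{i,j}=f^{j-i}(Z_1)_{i,j}$ and the fact that $Z_1$ has only subdiagonal ones and a single top-right one shows that
\[
Z_{f^n}=f\,D(\mathbf{f})^{-1}Z_1\,D(\mathbf{f}),\qquad\text{hence}\qquad Z_{f^n}^{\,i-1}=f^{\,i-1}D(\mathbf{f})^{-1}Z_1^{\,i-1}D(\mathbf{f}).
\]
Inserting this into the expansion $Z_{f^n}(\mathbf{v})=\sum_{i=1}^{n}v_iZ_{f^n}^{\,i-1}$ and pulling the diagonal factors outside the sum gives
\[
Z_{f^n}(\mathbf{v})=D(\mathbf{f})^{-1}Z_1\!\bigl(D(\mathbf{f})\mathbf{v}\bigr)D(\mathbf{f}),
\]
since multiplying the $i$th coordinate of $\mathbf{v}$ by $f^{i-1}$ is exactly the action of $D(\mathbf{f})$.

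Finally, I would apply the already-established $f=1$ identity to the inner factor $Z_1(D(\mathbf{f})\mathbf{v})=\Omega^{-1}D(\Omega D(\mathbf{f})\mathbf{v})\Omega$ and absorb $\Omega D(\mathbf{f})=U_f$ on both sides to obtain the claimed $Z_{f^n}(\mathbf{v})=U_f^{-1}D(U_f\mathbf{v})U_f$. The main obstacle is purely notational: reconciling the $1$-based indexing of $\mathbf{v}$ with the $0$-based indexing of $\boldsymbol{\omega}$, $\mathbf{f}$, and $\Omega$, and in particular confirming that the $j$th diagonal entry of $\sum_i v_iD(\boldsymbol{\omega})^{i-1}$ agrees literally with $(\Omega\mathbf{v})_j$. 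Once that bookkeeping is fixed, the proof is a short chain of substitutions.
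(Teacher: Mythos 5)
Your argument is correct, and it is worth noting that the paper itself supplies no proof of this theorem at all: it simply cites \cite{CPW74}, so your write-up fills in what the paper leaves to the reference. The two computations your reduction hinges on both check out. With $0$-based indexing, each row $(\omega_n^{ij})_{j}$ of $\Omega$ is indeed a left eigenvector of $Z_1$ with eigenvalue $\omega_n^i$ (using $\omega_n^n=1$ for the wrap-around entry), and the $j$th diagonal entry of $\sum_{i=1}^{n}v_iD(\boldsymbol{\omega})^{i-1}$ is $\sum_{i=1}^{n}v_i\omega_n^{j(i-1)}=(\Omega\mathbf{v})_j$ once one matches the $1$-based index $i$ of $\mathbf{v}$ with the $0$-based column index $i-1$ of $\Omega$ --- exactly the bookkeeping you flag. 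For the general case, $(D(\mathbf{f})^{-1}Z_1D(\mathbf{f}))_{i,j}=f^{j-i}(Z_1)_{i,j}$ puts $f^{-1}$ on the subdiagonal and $f^{n-1}$ in the corner, so multiplying by $f$ gives precisely $Z_{f^n}$, and pulling the scalars $f^{i-1}$ into $\mathbf{v}$ as $D(\mathbf{f})\mathbf{v}$ is legitimate because $D(\mathbf{f})$ multiplies the $i$th coordinate by $f^{i-1}$. Absorbing $\Omega D(\mathbf{f})=U_f$ then yields the stated identity. The only item of the theorem you do not address is the closing assertion $\Omega^{-1}=\frac{1}{n}\Omega^{H}$, which is the standard orthogonality of the roots of unity and is fairly regarded as part of the setup rather than the claim; if you want the proof fully self-contained you could add the one-line geometric-series verification.
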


 {\em Hankel} $m\times n$ matrices $H=(h_{i+j})_{i,j=1}^{m,n}$ can be 
 defined equivalently
as the products $H=TJ_n$ or $H=J_mT$ of
$m\times n$ Toep\-litz 
matrices $T$ and the Hankel reflection matrices $J=J_m$ or $J_n$.
Note that $J=J^{-1}=J^T$ and obtain the following simple fact.
\begin{fact}\label{fath}
For $m=n$ we have 
 $T=HJ$, $H^{-1}=JT^{-1}$ and $T^{-1}=JH^{-1}$ if $H=TJ$,
whereas $T=JH$, $H^{-1}=JT^{-1}$ and $T^{-1}=H^{-1}J$ if $H=JT$.
Furthermore in both cases $\kappa (H)=\kappa (T)$.
\end{fact}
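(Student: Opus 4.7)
The proof is essentially bookkeeping based on the identity $J^2=I$ (equivalently $J=J^{-1}=J^T$) already noted for the reflection matrix, combined with the fact that $J$ is orthogonal.

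First I would dispose of the algebraic identities relating $T$ and $H$. If $H=TJ$, right-multiply by $J$ and use $J^2=I$ to get $T=HJ$; then take inverses in $H=TJ$ and in $T=HJ$ using $(AB)^{-1}=B^{-1}A^{-1}$ together with $J^{-1}=J$ to obtain $H^{-1}=JT^{-1}$ and $T^{-1}=JH^{-1}$. The case $H=JT$ is symmetric: left-multiply by $J$ to get $T=JH$, then invert each identity and use $J^{-1}=J$ to get the stated formulas. Each of these is a one-line manipulation, so there is no real obstacle here; I would present the four identities compactly in a short displayed equation.

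Next I would handle the condition-number identity $\kappa(H)=\kappa(T)$. The key observation is that $J$ is a permutation matrix, hence orthogonal, so the spectral norm is invariant under right- or left-multiplication by $J$: for any matrix $A$ of conforming size, $\|AJ\|=\|JA\|=\|A\|$. Applying this to $H=TJ$ (or $H=JT$) gives $\|H\|=\|T\|$, and applying it to the corresponding formula for $H^{-1}$ derived above gives $\|H^{-1}\|=\|T^{-1}\|$. Multiplying these two equalities yields $\kappa(H)=\|H\|\,\|H^{-1}\|=\|T\|\,\|T^{-1}\|=\kappa(T)$, as required.

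The only step that needs minor care is noting that the claim presumes invertibility of $T$ (and hence of $H$, since $J$ is invertible) whenever inverses are written; this is implicit in the formulas. Beyond that, there is no conceptual obstacle: the entire statement is a direct corollary of $J^2=I$ and the orthogonality of $J$. I would therefore write the proof as one short paragraph enumerating the four inverse identities and a second short paragraph invoking orthogonality of $J$ for the norm equalities.
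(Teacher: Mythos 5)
Your proof is correct and follows exactly the paper's own (one-line) argument: the text simply notes $J=J^{-1}=J^T$ and leaves the fact as an immediate consequence, which is precisely the bookkeeping you carry out, with the orthogonality of $J$ supplying the norm and condition-number invariance. One small caveat: inverting $H=JT$ actually yields $H^{-1}=T^{-1}J$ rather than the $JT^{-1}$ printed in the second case of the Fact, so your phrase ``the stated formulas'' glosses over what appears to be a typo in the statement itself rather than a flaw in your derivation.
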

By using the equations above we can 
readily extend any Toep\-litz  matrix inversion algorithm 
 to Hankel 
 matrix inversion
and vice versa, 
preserving the flop count and condition numbers.
E.g. $(JT)^{-1}=T^{-1}J$, $(TJ)^{-1}=JT^{-1}$,
$(JH)^{-1}=H^{-1}J$ and $(HJ)^{-1}=JH^{-1}$.


\section{Gaussian random matrices and their
ranks}\label{srvrm}


\begin{definition}\label{defcdf}
$F_{\gamma}(y)=$ Probability$\{\gamma\le y\}$ (for a real random variable $\gamma$)
is the {\em cumulative 
distribution function (cdf)} of $\gamma$ evaluated at $y$. 
$F_{g(\mu,\sigma)}(y)=\frac{1}{\sigma\sqrt {2\pi}}\int_{-\infty}^y \exp (-\frac{(x-\mu)^2}{2\sigma^2}) dx$ 
for a Gaussian random variable $g(\mu,\sigma)$ with a mean $\mu$ and a positive variance $\sigma^2$,
and so   
\begin{equation}\label{eqnormal}
\mu-4\sigma\le y \le \mu+4\sigma~{\rm with ~a ~probability ~near ~1}.
\end{equation}
\end{definition}


\begin{definition}\label{defrndm}
A matrix (or a vector) is a {\em Gaussian random matrix (or vector)} with a mean 
$\mu$ and a positive variance $\sigma^2$ if it is filled with 
independent identically distributed Gaussian random 
variables, all having the mean $\mu$ and variance $\sigma^2$. 
$\mathcal G_{\mu,\sigma}^{m\times n}$ is the set of such
Gaussian  random  $m\times n$ matrices
(which are {\em standard} for $\mu=0$
and $\sigma^2=1$). By restricting this set 
to Toeplitz or $f$-circulant matrices we obtain the sets
$\mathcal T_{\mu,\sigma}^{m\times n}$ and
$\mathcal Z_{f,\mu,\sigma}^{n\times n}$ of
{\em Gaussian random Toep\-litz} 
and {\em Gaussian random $f$-circulant matrices}, 
respectively.  
\end{definition}


\begin{definition}\label{defchi}
$\chi_{\mu,\sigma,n}(y)$ is the cdf of the norm 
$||{\bf v}||=(\sum_{i=1}^n v_i^2)^{1/2}$ of a Gaussian random vector
${\bf v}=(v_i)_{i=1}^n\in \mathcal G_{\mu,\sigma}^{n\times 1}$. For 
 $y\ge 0$ we have 
$\chi_{0,1,n}(y)= \frac {2}{2^{n/2}\Gamma(n/2)}\int_{0}^yx^{n-1}\exp(-x^2/2) dx$ 
where $\Gamma(h)=\int_0^{\infty}x^{h-1}\exp(-x) dx$, $\Gamma (n+1)=n!$ for nonnegative integers $n$.
\end{definition}



The total degree of a multivariate monomial is the sum of its degrees
in all its variables. The total degree of a polynomial is the maximal total degree of 
its monomials.


\begin{lemma}\label{ledl} \cite{DL78}, \cite{S80}, \cite{Z79}.
For a set $\Delta$ of a cardinality $|\Delta|$ in any fixed ring  
let a polynomial in $m$ variables have a total degree $d$ and let it not vanish 
identically on this set. Then the polynomial vanishes in at most 
$d|\Delta|^{m-1}$ points. 
\end{lemma}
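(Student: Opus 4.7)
The plan is to prove the Schwartz--Zippel--DeMillo--Lipton type bound by induction on the number of variables $m$, over an integral domain (or a ring in which the polynomial arithmetic is done; the bound is used here only to deduce generic nonsingularity, so working in a field such as $\mathbb{R}$ or $\mathbb{C}$ is enough). I write $p\in R[x_1,\dots,x_m]$ for the polynomial, $d$ for its total degree, and consider the zero set $Z(p)\cap\Delta^m$.

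For the base case $m=1$, the claim reduces to the classical fact that a nonzero univariate polynomial of degree $d$ has at most $d$ roots in the coefficient domain, so the bound $d|\Delta|^{0}=d$ is immediate.

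For the inductive step, I collect $p$ as a polynomial in the last variable,
\begin{equation*}
p(x_1,\dots,x_m)=\sum_{i=0}^{k} q_i(x_1,\dots,x_{m-1})\,x_m^{\,i},
\end{equation*}
where $k\le d$ is chosen so that the leading coefficient $q_k$ is not identically zero on $\Delta^{m-1}$. Since the monomials contributing to $q_k$ arise from monomials of $p$ by stripping off a factor $x_m^k$, the total degree of $q_k$ is at most $d-k$. I then split the set of zeros of $p$ in $\Delta^m$ into two disjoint parts: (i) those $(a_1,\dots,a_m)$ for which $q_k(a_1,\dots,a_{m-1})=0$, and (ii) the remaining zeros. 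By the inductive hypothesis applied to $q_k$, case (i) contributes at most $(d-k)|\Delta|^{m-2}$ choices of $(a_1,\dots,a_{m-1})$, each combined with at most $|\Delta|$ values of $a_m$, hence at most $(d-k)|\Delta|^{m-1}$ zeros. In case (ii), for each of the at most $|\Delta|^{m-1}$ tuples $(a_1,\dots,a_{m-1})$ the polynomial $p(a_1,\dots,a_{m-1},x_m)$ is a nonzero univariate polynomial of degree exactly $k$, so it has at most $k$ roots in $\Delta$, contributing at most $k|\Delta|^{m-1}$ zeros.

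Adding the two bounds gives $(d-k)|\Delta|^{m-1}+k|\Delta|^{m-1}=d|\Delta|^{m-1}$, which closes the induction. The only delicate point is the bookkeeping of total degrees to ensure that the inductive hypothesis applies to $q_k$ with degree $d-k$ and matches the exponent $m-2$ in $|\Delta|^{m-2}$; everything else is the standard two-case split and a union bound, so I do not expect any real obstacle beyond writing the induction cleanly.
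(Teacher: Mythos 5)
The paper does not prove this lemma at all --- it simply cites \cite{DL78}, \cite{S80}, \cite{Z79} --- so there is no in-paper argument to compare against; what you have written is the standard induction from those sources, and it is correct in substance. Two points deserve attention. First, the lemma as stated speaks of ``any fixed ring,'' but both your base case and your case (ii) invoke the fact that a nonzero univariate polynomial of degree $k$ has at most $k$ roots, which fails over rings with zero divisors (e.g.\ $x^2-1$ has four roots in $\mathbb{Z}/8\mathbb{Z}$); you flag this and restrict to an integral domain, which is the right move and is all the paper needs, since the lemma is only applied to determinants of real Gaussian matrices.

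Second, there is a small internal inconsistency in your choice of $k$: you write $p=\sum_{i=0}^{k}q_i x_m^i$, which presumes $q_i=0$ for $i>k$, while simultaneously ``choosing'' $k$ so that $q_k$ does not vanish identically on $\Delta^{m-1}$. These two requirements can conflict (take $p(x,y)=(x^2-x)y+1$ with $\Delta=\{0,1\}$ over $\mathbb{Z}$: the top coefficient in $y$ vanishes on $\Delta$ but is not the zero polynomial). The clean repair is to let $k=\deg_{x_m}p$, so that $q_k$ is a nonzero \emph{polynomial} of total degree at most $d-k$, and to run the induction on the hypothesis ``$p$ is not the zero polynomial'' (which is implied by, hence weaker than, ``$p$ does not vanish identically on $\Delta^m$''). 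With that adjustment your two-case split and the bound $(d-k)|\Delta|^{m-1}+k|\Delta|^{m-1}=d|\Delta|^{m-1}$ go through verbatim, and the degree bookkeeping $\deg q_k\le d-k$ is exactly as you describe.
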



We assume that Gaussian random variables range 
over infinite sets $\Delta$,
usually over the real line or its interval. Then
the lemma implies that a nonzero polynomial vanishes with probability 0.
Consequently a square Gaussian random general, Toeplitz or circulant
matrix is nonsingular 
with probability 1
because its determinant is a polynomials
in the entries. 
Likewise rectangular
 Gaussian random general, Toeplitz and circulant 
matrices have full rank with probability 1.
Hereafter,
wherever this causes no confusion,  
we assume by default that
{\em Gaussian random 
 general, Toeplitz and circulant 
matrices  have full rank}.


\section{Extremal singular values of Gaussian random matrices}\label{ssvrm}

Besides  having full rank with probability 1,
Gaussian random matrices in Definition \ref{defrndm} are  likely to be well conditioned  
\cite{D88}, \cite{E88}, \cite{ES05}, \cite{CD05}, \cite{B11}, and 
even the sum $M+A$ for  $M\in \mathbb R^{m\times n}$ and 
 $A\in \mathcal G_{\mu,\sigma}^{m\times n}$ is  likely to
be well conditioned unless the ratio  $\sigma/||M||$ is small
or large 
\cite{SST06}. 

The following theorem 
states an upper bound 
proportional to $y$ on
 the cdf $F_{1/||A^+||}(y)$, that is  
on the probability that  the 
smallest positive singular value $1/||A^+||=\sigma_l(A)$ of a  Gaussian random matrix $A$ 
is less than a nonnegative scalar $y$ (cf. (\ref{eqnrm+}))
and consequently on the probability that the norm $||A^+||$
exceeds a positive scalar $x$.
The stated bound still holds if we replace the matrix $A$ by 
$A-B$ for any fixed matrix $B$, and
for $B=O_{m,n}$
the  bounds
can  be strengthened  
by a factor $y^{|m-n|}$ \cite{ES05}, \cite{CD05}.


\begin{theorem}\label{thsiguna} 
Suppose 
$A\in \mathcal G_{\mu,\sigma}^{m\times n}$, 
 $B\in \mathbb R^{m\times n}$,
$l=\min\{m,n\}$,  $x>0$, and $y\ge 0$. 
Then 
$F_{\sigma_l(A-B)}(y)\le 2.35~\sqrt l y/\sigma$, 
that is
$Probability \{||(A-B)^+||\ge 2.35x\sqrt {l}/\sigma\}\le 1/x$.
\end{theorem}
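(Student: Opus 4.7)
The plan is to reduce the claim to a known smallest-singular-value tail bound for additively perturbed Gaussian matrices. The first observation is a translation trick: if we write $A = \mu\mathbf{1}_{m,n} + \sigma G$ where $G \in \mathcal{G}_{0,1}^{m\times n}$ is a standard Gaussian matrix, then
\[
A - B = C + \sigma G, \qquad C := \mu\mathbf{1}_{m,n} - B \in \mathbb{R}^{m\times n},
\]
so $A-B$ is a deterministic $m\times n$ matrix plus a $\sigma$-scaled standard Gaussian perturbation. In particular, the distribution of $\sigma_l(A-B)$ depends on $B$ (and $\mu$) only through $C$, which can now be viewed as arbitrary.

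Next I would invoke the Sankar--Spielman--Teng-type bound on the smallest singular value of a Gaussian perturbation of a fixed matrix, in the rectangular form due to Edelman and Chen--Dongarra (cited earlier as \cite{SST06}, \cite{ES05}, \cite{CD05}): for every fixed $C \in \mathbb{R}^{m\times n}$ and every $y \ge 0$,
\[
\mathrm{Probability}\{\sigma_l(C + \sigma G) \le y\} \;\le\; 2.35\,\sqrt{l}\,y/\sigma.
\]
Applying this to $C = \mu\mathbf{1}_{m,n} - B$ gives $F_{\sigma_l(A-B)}(y) \le 2.35\sqrt{l}\,y/\sigma$, which is the first form of the theorem.

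Finally I would derive the second form by the change of variable $y = \sigma/(2.35\,x\sqrt{l})$. By (\ref{eqnrm+}) we have $\|(A-B)^+\| = 1/\sigma_l(A-B)$, so the event $\|(A-B)^+\| \ge 2.35\,x\sqrt{l}/\sigma$ coincides with $\sigma_l(A-B) \le \sigma/(2.35\,x\sqrt{l})$. Plugging this $y$ into the cdf bound yields probability at most $2.35\sqrt{l}/\sigma \cdot \sigma/(2.35\,x\sqrt{l}) = 1/x$, as claimed.

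The main obstacle is simply citing the correct rectangular version of the perturbation bound with the constant $2.35$: the original \cite{SST06} result is stated for square matrices, and one must appeal to \cite{ES05} or \cite{CD05} (or repeat the SST argument, conditioning on all but one row to reduce to a one-dimensional Gaussian anti-concentration inequality) to cover the case $m \ne n$. Once that ingredient is in place, the rest of the proof is just the translation and the algebraic substitution above.
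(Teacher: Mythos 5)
Your translation trick (absorbing $\mu$ and $B$ into a single fixed matrix $C$) and your derivation of the second form from the first via (\ref{eqnrm+}) and the substitution $y=\sigma/(2.35\,x\sqrt{l})$ are both correct, and for square matrices your citation of the smoothed bound of \cite{SST06} (Theorem 3.3 there) is exactly what the paper does. The gap is in the rectangular case $m\ne n$, which you yourself flag as ``the main obstacle'' but do not actually close. The references you propose for the rectangular version, \cite{ES05} and \cite{CD05}, treat only the centered case $B=O_{m,n}$ --- this is precisely the remark the paper makes just before the theorem, namely that for $B=O_{m,n}$ those papers strengthen the bound by a factor $y^{|m-n|}$ --- so they do not supply the perturbed rectangular bound with the constant $2.35$. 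Your fallback of redoing the Sankar--Spielman--Teng conditioning argument row by row is plausible but is not carried out, and it is unnecessary.

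The paper closes this gap with a one-line argument you should adopt: Fact \ref{faccondsub} states that $\sigma_j(A)\ge\sigma_j(A_0)$ for every submatrix $A_0$ of $A$. Choose any $l\times l$ submatrix $A_0-B_0$ of $A-B$, where $l=\min\{m,n\}$; it is again a square Gaussian matrix minus a fixed matrix, so the square case gives $F_{\sigma_l(A_0-B_0)}(y)\le 2.35\sqrt{l}\,y/\sigma$, and since $\sigma_l(A-B)\ge\sigma_l(A_0-B_0)$ the event $\{\sigma_l(A-B)\le y\}$ is contained in $\{\sigma_l(A_0-B_0)\le y\}$, which transfers the cdf bound to the rectangular matrix. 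With that substitution in place of your citation of \cite{ES05} and \cite{CD05}, your proof coincides with the paper's.
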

\begin{proof}
For $m=n$ this is \cite[Theorem 3.3]{SST06}. Apply
 Fact \ref{faccondsub} to extend it to any pair $\{m,n\}$.
\end{proof}


The following two theorems supply lower bounds
$F_{||A||}(z)$ and
$F_{\kappa (A)}(y)$
 on the probabilities 
that $||A||\le z$ 
and $\kappa(A)\le y$ for two scalars $y$ and $z$, 
respectively,
and a Gaussian random matrix $A$. 
We do not use the second theorem, but state it for the sake of completeness
and only for square $n\times n$ matrices $A$.
The theorems  
imply that  
the functions 
$1-F_{||A||}(z)$
and
$1-F_{\kappa (A)}(y)$ 
decay as 
$z\rightarrow \infty$ and
$y\rightarrow \infty$, respectively,
and that the two decays are exponential in $-z^2$ and  proportional 
to $\sqrt{\log y}/y$, respectively.
 For small values $y\sigma$ and a fixed $n$ 
the lower bound of Theorem \ref{thmsiguna}
becomes negative, in which case 
the theorem becomes trivial. 
Unlike Theorem \ref{thsiguna}, in both theorems we assume that $\mu=0$. 


\begin{theorem}\label{thsignorm} \cite[Theorem II.7]{DS01}.
Suppose $A\in \mathcal G_{0,\sigma}^{m\times n}$,
$h=\max\{m,n\}$  and
$z\ge 2\sigma\sqrt h$. 
Then $F_{||A||}(z)\ge 1- \exp(-(z-2\sigma\sqrt h)^2/(2\sigma^2))$, and so
the norm $||A||$ is  likely to have order $\sigma\sqrt h$. 
\end{theorem}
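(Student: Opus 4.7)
The plan is to combine Gaussian concentration of measure with a comparison bound on the expected spectral norm, which is the standard route to Davidson--Szarek-type estimates.

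First I would reduce to the isotropic case by writing $A = \sigma G$ where $G \in \mathcal{G}_{0,1}^{m\times n}$, so that $\|A\| = \sigma\|G\|$ and the inequality to be proved becomes
\[
\prob\{\|G\| \ge z/\sigma\} \le \exp\bigl(-(z/\sigma - 2\sqrt{h})^2/2\bigr).
\]
Next I would observe that the map $\phi: \mathbb{R}^{mn} \to \mathbb{R}$ sending the entries of $G$ to $\|G\| = \sup_{\|\mathbf{x}\|=\|\mathbf{y}\|=1} \mathbf{y}^T G \mathbf{x}$ is $1$-Lipschitz with respect to the Euclidean (Frobenius) metric, since for any unit $\mathbf{x},\mathbf{y}$ we have $|\mathbf{y}^T(G-G')\mathbf{x}| \le \|G-G'\|_F$ by Cauchy--Schwarz. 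Applying the Borell--Tsirelson--Ibragimov--Sudakov Gaussian concentration inequality to $\phi$ then yields, for every $t \ge 0$,
\[
\prob\{\|G\| \ge \mathbb{E}\|G\| + t\} \le \exp(-t^2/2).
\]

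The main work is the bound $\mathbb{E}\|G\| \le \sqrt{m} + \sqrt{n} \le 2\sqrt{h}$. I would obtain this through a Slepian/Gordon-type Gaussian comparison: define two centered Gaussian processes indexed by pairs $(\mathbf{x},\mathbf{y})$ on the product of unit spheres, namely $X_{\mathbf{x},\mathbf{y}} = \mathbf{y}^T G \mathbf{x}$ and $Y_{\mathbf{x},\mathbf{y}} = \mathbf{g}^T \mathbf{x} + \mathbf{h}^T \mathbf{y}$ with $\mathbf{g},\mathbf{h}$ independent standard Gaussian vectors in $\mathbb{R}^n,\mathbb{R}^m$ respectively. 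A short computation of the incremental variances $\mathbb{E}(X_{\mathbf{x},\mathbf{y}} - X_{\mathbf{x}',\mathbf{y}'})^2$ and $\mathbb{E}(Y_{\mathbf{x},\mathbf{y}} - Y_{\mathbf{x}',\mathbf{y}'})^2$ verifies the hypothesis of Slepian's lemma, giving
\[
\mathbb{E}\|G\| = \mathbb{E}\sup X \le \mathbb{E}\sup Y = \mathbb{E}\|\mathbf{g}\| + \mathbb{E}\|\mathbf{h}\| \le \sqrt{n} + \sqrt{m},
\]
where the final step uses Jensen's inequality on the $\chi$-distribution.

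Combining these ingredients and setting $t = z/\sigma - 2\sqrt{h}$, which is nonnegative by the hypothesis $z \ge 2\sigma\sqrt{h}$, produces the claimed tail bound. The main obstacle is the expectation estimate: the Lipschitz/concentration step is essentially automatic, whereas establishing $\mathbb{E}\|G\| \le \sqrt{m}+\sqrt{n}$ with a tight constant requires the Gaussian comparison argument and a careful check of the variance inequality that underlies Slepian's lemma.
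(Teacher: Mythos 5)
The paper offers no proof of this statement: it is quoted directly from \cite[Theorem II.7]{DS01}, so there is no internal argument to compare against. Your proposal is a correct proof and is, in substance, the proof given in that reference: reduce to a standard Gaussian matrix $G$ with $\|A\|=\sigma\|G\|$, note that $G\mapsto\|G\|$ is $1$-Lipschitz in the Frobenius metric, apply Gaussian concentration to get $\prob\{\|G\|\ge\mathbb{E}\|G\|+t\}\le e^{-t^2/2}$, and bound $\mathbb{E}\|G\|\le\sqrt m+\sqrt n\le 2\sqrt h$ by comparison with the decoupled process $\mathbf g^T\mathbf x+\mathbf h^T\mathbf y$. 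Your variance check is the right one: $\mathbb{E}(X_{\mathbf x,\mathbf y}-X_{\mathbf x',\mathbf y'})^2=2-2(\mathbf x^T\mathbf x')(\mathbf y^T\mathbf y')$ while $\mathbb{E}(Y_{\mathbf x,\mathbf y}-Y_{\mathbf x',\mathbf y'})^2=4-2\mathbf x^T\mathbf x'-2\mathbf y^T\mathbf y'$, and their difference is $2(1-\mathbf x^T\mathbf x')(1-\mathbf y^T\mathbf y')\ge0$ on the unit spheres. Two cosmetic points only: the expectation comparison under domination of increments is properly the Sudakov--Fernique inequality rather than Slepian's lemma in its classical form (which compares tail probabilities and assumes matched variances, not satisfied here); and the final step needs $\mathbb{E}\|G\|\le2\sqrt h$ to guarantee that the event $\{\|G\|\ge z/\sigma\}$ is contained in $\{\|G\|\ge\mathbb{E}\|G\|+t\}$ for $t=z/\sigma-2\sqrt h\ge0$, which your chain of estimates does supply. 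Neither point affects correctness.
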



\begin{theorem}\label{thmsiguna}  \cite[Theorem 3.1]{SST06}.
Suppose  
$0<\sigma\le 1$,  
$y\ge 1$,  
 $A\in \mathcal G_{0,\sigma}^{n\times n}$. Then the matrix $A$
 has full rank with 
probability $1$ and 
$F_{\kappa (A)}(y)\ge 1-(14.1+4.7\sqrt{(2\ln y)/n})n/ (y\sigma)$.
\end{theorem}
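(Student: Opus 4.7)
My plan is to decompose the condition number as $\kappa(A)=\|A\|\cdot\|A^+\|$ and bound the two factors separately, then combine by a union bound. For any split $y=zw$ with $z,w>0$, the event $\{\kappa(A)>y\}$ is contained in $\{\|A\|>z\}\cup\{\|A^+\|>w\}$, so
\[
\prob\{\kappa(A)>y\}\le \prob\{\|A\|>z\}+\prob\{\|A^+\|>w\}.
\]

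The tail of $\|A^+\|$ is handled directly by Theorem \ref{thsiguna}, applied with $B=O_{n,n}$ and $m=n$, giving $\prob\{\|A^+\|\ge w\}\le 2.35\sqrt{n}/(w\sigma)$. For the tail of $\|A\|$ I would appeal to Theorem \ref{thsignorm} with $h=n$, which yields the Gaussian-type estimate $\prob\{\|A\|\ge z\}\le \exp(-(z-2\sigma\sqrt{n})^2/(2\sigma^2))$ whenever $z\ge 2\sigma\sqrt{n}$. Setting $w=y/z$, I obtain
\[
\prob\{\kappa(A)>y\}\le \frac{2.35\,z\sqrt{n}}{y\sigma}+\exp\!\left(-\frac{(z-2\sigma\sqrt{n})^2}{2\sigma^2}\right).
\]

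Next I would optimize in $z$. A natural choice is $z=2\sigma\sqrt{n}+\sigma\sqrt{2\ln y}$, which forces the exponential term to be at most $1/y$ and reduces the first term to $(4.7\,n+2.35\sqrt{2n\ln y})/y$. Using the hypothesis $\sigma\le 1$ to replace $1/y$ and $1/(y)$ everywhere by $1/(y\sigma)$, I get a bound of the form $(C_1+C_2\sqrt{(2\ln y)/n})\,n/(y\sigma)$, which matches the structure of the claimed lower bound on $F_{\kappa(A)}(y)$. Full-rank is automatic with probability $1$ by the Schwartz--Zippel argument already invoked in Section \ref{srvrm}, since $\det A$ is a nonzero polynomial in the entries of $A$.

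The main obstacle is matching the precise constants $14.1$ and $4.7$ in the stated theorem. The sub-Gaussian tail of Theorem \ref{thsignorm} is in fact sharper than needed and would yield smaller numerical constants than the ones claimed; the SST06 route uses instead a Markov-type estimate of the form $\prob\{\|A\|\ge z\}\le c\sigma\sqrt{n}/z$ driven by an expectation bound $\mathbb{E}\|A\|\le c\sigma\sqrt{n}$, which is looser but combines more cleanly with the $1/w$-shaped bound on $\|A^+\|$ to give the announced constants. A careful bookkeeping of this substitution, together with noting that the bound is vacuous precisely when $(14.1+4.7\sqrt{(2\ln y)/n})n/(y\sigma)\ge 1$ (so no issue arises in the degenerate regime where $y\sigma$ is small), would reproduce the stated inequality.
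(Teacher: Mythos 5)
Your argument is correct, but it is a genuinely different route from the paper's: the paper offers no derivation at all and simply defers to the proof of Lemma 3.2 in \cite{SST06}, whereas you assemble a self-contained proof from the two tail bounds the paper already states, namely Theorem \ref{thsiguna} for $\|A^+\|$ (with $B=O_{n,n}$, $l=n$) and Theorem \ref{thsignorm} for $\|A\|$. The union bound over the split $y=zw$, the choice $z=2\sigma\sqrt n+\sigma\sqrt{2\ln y}$ (admissible since $y\ge 1$ gives $z\ge 2\sigma\sqrt n$), and the use of $\sigma\le 1$ to absorb the $1/y$ term all check out, and they yield
$\prob\{\kappa(A)>y\}\le \bigl(4.7\,n+2.35\sqrt{2n\ln y}+1\bigr)/(y\sigma)$,
which is \emph{stronger} than the stated bound since $4.7n+1\le 14.1n$ and $2.35\le 4.7$. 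Your final paragraph therefore worries about a non-issue: because the theorem only asserts a lower bound on $F_{\kappa(A)}(y)$, there is no need to reproduce the constants $14.1$ and $4.7$ exactly, and no need to trade the sub-Gaussian tail of Theorem \ref{thsignorm} for a looser Markov-type estimate on $\|A\|$. (Those larger constants arise in \cite{SST06} because Lemma 3.2 there treats the smoothed case $A=\bar A+G$ with an arbitrary center of norm at most $\sqrt n$; in the centered case $\mu=0$ assumed here your route is cleaner and sharper.) What each approach buys: the paper's citation inherits the exact published constants with no work, while your derivation makes the theorem a corollary of results already present in Sections \ref{srvrm} and \ref{ssvrm} of the paper, at the cost of not literally reproducing the quoted inequality's constants --- it dominates them instead. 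The full-rank claim via the Schwartz--Zippel argument of Section \ref{srvrm} is exactly what the paper intends.
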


  


 
\begin{proof}
See \cite[the proof of Lemma 3.2]{SST06}.
\end{proof}


\section{Extremal singular values of Gaussian random Toeplitz matrices}\label{scgrtm}


A  matrix 
$T_n=(t_{i-j})_{i,j=1}^n$
is the sum of two triangular 
Toeplitz matrices


\begin{equation}\label{eqt2tt}
T_n= Z({\bf t})+Z({\bf t_-})^T,~{\bf t}=(t_{i})_{i=0}^{n-1},~{\bf t}_-=(t'_{-i})_{i=0}^{n-1},~
t'_0=0.
\end{equation}
If $T_n\in  \mathcal T_{\mu,\sigma}^{n\times n}$, then
$T_n$ has $2n-1$ pairwise independent entries in $\mathcal G_{\mu,\sigma}$. Thus 
(\ref{eqttn}) implies that


$$ ||T_n||\le ||Z({\bf t})||+||Z({\bf t_-})^T||\le 
||{\bf t}||_1+||{\bf t_-}||_1= ||(t_{i})_{i=1-n}^{n-1}||_1\le \sqrt {2n-1}~||(t_{i})_{i=1-n}^{n-1}||.$$


\noindent Recall Definition \ref{defrndm} and obtain


\begin{equation}\label{eqtn}
F_{||T_n||}(y)\ge \chi_{\mu,\sigma,2n-1}(y/\sqrt {2n-1}).
\end{equation}


Next we estimate 
 the norm $||T_n^{-1}||$ 
for 
$T_{n}\in \mathcal T_{\mu,\sigma}^{n\times n}$.


\begin{lemma}\label{leinp} \cite[Lemma A.2]{SST06}.
For a nonnegative scalar $y$, a unit vector ${\bf t}\in \mathbb R^{n\times 1}$, and a vector
 ${\bf b}\in \mathcal G_{\mu,\sigma}^{n\times 1}$, 
 we have  
$F_{|{\bf t}^T{\bf b}|}(y)\le \sqrt{\frac{2}{\pi}}\frac{y}{\sigma}$.
\end{lemma}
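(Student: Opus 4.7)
The plan is to reduce the multivariate statement to a one-dimensional Gaussian tail estimate by exploiting that ${\bf t}$ is a unit vector. First I would observe that the scalar $s = {\bf t}^T{\bf b}$ is a linear combination of independent Gaussian entries of ${\bf b}$, hence itself a real Gaussian random variable. Linearity of expectation gives mean $\mu \sum_{i=1}^n t_i$, and independence together with $\|{\bf t}\|=1$ gives variance $\sum_{i=1}^n t_i^2 \sigma^2 = \sigma^2$. Thus $s$ is distributed as $g(\mu', \sigma)$ for some real $\mu'$, with the same variance $\sigma^2$ as a single entry of ${\bf b}$.

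Next I would bound the density. The Gaussian density from Definition \ref{defcdf}, namely $\frac{1}{\sigma\sqrt{2\pi}}\exp(-(x-\mu')^2/(2\sigma^2))$, is maximized at $x=\mu'$ where it equals $\frac{1}{\sigma\sqrt{2\pi}}$. Therefore the probability that $s$ falls into any interval of length $L$ is at most $L/(\sigma\sqrt{2\pi})$, regardless of where that interval is placed on the real line.

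Finally I would apply this to the event $|s| \le y$, which corresponds to the interval $[-y, y]$ of length $2y$. This gives
\[
F_{|{\bf t}^T{\bf b}|}(y) \;\le\; \frac{2y}{\sigma\sqrt{2\pi}} \;=\; \sqrt{\frac{2}{\pi}}\,\frac{y}{\sigma},
\]
which is the claimed inequality. There is no real obstacle here: the only subtlety is noticing that the density bound is uniform in the mean $\mu'$, so the presence of a nonzero $\mu$ in the distribution of ${\bf b}$ (and thus of $s$) does not weaken the bound.
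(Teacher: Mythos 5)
Your proof is correct. Note that the paper does not actually prove this lemma; it simply cites \cite[Lemma A.2]{SST06}, so your argument supplies the self-contained proof that the paper omits. The reasoning is the standard one and matches the proof in the cited source: ${\bf t}^T{\bf b}$ is Gaussian with variance exactly $\sigma^2$ because $\|{\bf t}\|=1$, its density is bounded everywhere by $\frac{1}{\sigma\sqrt{2\pi}}$ independently of the mean, and integrating that bound over the interval $[-y,y]$ gives $\sqrt{2/\pi}\,y/\sigma$. Your closing observation that the bound is uniform in the (shifted) mean $\mu'$ is exactly the point the paper records separately in Remark \ref{reinp}, and it is what makes the lemma usable in the conditioning argument of Theorem \ref{thsigunat1}.
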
  


\begin{remark}\label{reinp}
The latter bound is independent of $\mu$ and $n$;
it holds for any $\mu$ even if 
all coordinates of the vector ${\bf b}$ are fixed except for a
single coordinate in $\mathcal G_{\mu,\sigma}$.
\end{remark}


\begin{theorem}\label{thsigunat1}  
Given a matrix 
$T_{n}=(t_{i-j})_{i,j=1}^n\in \mathcal T_{\mu,\sigma}^{n\times n}$,
assumed to be nonsingular (cf. Section \ref{srvrm}),
write 
$p_{1}={\bf e}_1^TT_n^{-1}{\bf e}_1$.
Then $F_{1/||p_{1}T_n^{-1}||}(y)\le 2n\alpha \beta$ 
 for two random variables  $\alpha$ and $\beta$
such that 
\begin{equation}\label{eqprtinv}
F_{\alpha}(y)\le \sqrt{\frac{2n}{\pi}}\frac{y}{\sigma}~{\rm and}~
F_{\beta}(y)\le \sqrt{\frac{2n}{\pi}}\frac{y}{\sigma}~{\rm for}~y\ge 0.
\end{equation}    
\end{theorem}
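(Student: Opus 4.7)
The plan combines the Gohberg--Semencul inversion formula with a Gaussian anti-concentration argument for a single isolated entry of $T_n$. Writing ${\bf p}=T_n^{-1}{\bf e}_1$ and ${\bf q}=T_n^{-1}{\bf e}_n$, Theorem \ref{thgs}(a) gives
\[
p_1T_n^{-1}=Z({\bf p})Z(J{\bf q})^T-Z(Z{\bf q})Z(ZJ{\bf p})^T.
\]
Triangle inequality and sub-multiplicativity of the 2-norm, combined with $\|Z({\bf v})\|\le\|{\bf v}\|_1$ from (\ref{eqttn}), $\|A^T\|=\|A\|$, and the elementary $\|J{\bf v}\|_1=\|{\bf v}\|_1$, $\|Z{\bf v}\|_1\le\|{\bf v}\|_1$, bound the norm by $2\|{\bf p}\|_1\|{\bf q}\|_1$. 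Applying $\|{\bf v}\|_1\le\sqrt n\,\|{\bf v}\|$ twice produces $\|p_1T_n^{-1}\|\le 2n\|{\bf p}\|\|{\bf q}\|$, so reciprocating gives
\[
\frac{1}{\|p_1T_n^{-1}\|}\ge\frac{\alpha\beta}{2n}\qquad\text{with }\alpha:=1/\|{\bf p}\|,\ \beta:=1/\|{\bf q}\|,
\]
which yields the event inclusion $\{1/\|p_1T_n^{-1}\|\le y\}\subseteq\{\alpha\beta\le 2ny\}$ and therefore the $F_{1/\|p_1T_n^{-1}\|}(y)\le 2n\alpha\beta$ clause of the statement.

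To bound $F_\alpha(y)=P(\|{\bf p}\|\ge 1/y)$ by $\sqrt{2n/\pi}\,y/\sigma$ (and analogously for $\beta$), I would exploit the Toeplitz-specific fact that the corner entries $t_{n-1}$ and $t_{1-n}$ each occupy exactly one cell of $T_n$ (positions $(n,1)$ and $(1,n)$). Conditioning on all the remaining entries and applying Sherman--Morrison to $T_n=T_n^{(0)}+t_{1-n}{\bf e}_1{\bf e}_n^T$ produces
\[
{\bf p}=\frac{{\bf p}^{(0)}}{1+t_{1-n}p_n^{(0)}},\qquad\frac{1}{\|{\bf p}\|}=\frac{|1+t_{1-n}p_n^{(0)}|}{\|{\bf p}^{(0)}\|},
\]
an affine function of the single Gaussian $t_{1-n}\sim\mathcal N(\mu,\sigma^2)$ under the conditioning. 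Lemma \ref{leinp} and Remark \ref{reinp} then supply a linear-in-$y$ anti-concentration bound, and the additional $\sqrt n$ factor needed to reach $\sqrt{2n/\pi}\,y/\sigma$ can be routed through the $\ell_1$-to-$\ell_2$ conversion $\|{\bf p}\|_1\le\sqrt n\|{\bf p}\|$ already used in the first paragraph. The analogous reduction for $\beta$ uses $t_{n-1}$ and the identity $T_n{\bf q}={\bf e}_n$.

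The main obstacle is the residual random factor $\|{\bf p}^{(0)}\|/|p_n^{(0)}|$ that a naive Gaussian-density estimate brings in, since it delivers only $\sqrt{2/\pi}\,y\|{\bf p}^{(0)}\|/(\sigma|p_n^{(0)}|)$ per conditioning and the ratio has no obvious a priori upper bound. I expect the resolution to come from one of three routes: either an averaging or high-probability bound on $\|{\bf p}^{(0)}\|/|p_n^{(0)}|$ exploiting that ${\bf p}^{(0)}$ is itself random with respect to the other Toeplitz entries; or rerouting the $\sqrt n$ through the $\ell_1$-to-$\ell_2$ inequality so that the anti-concentration step only needs to supply the $\sqrt{2/\pi}$ factor; or replacing Theorem \ref{thgs}(a) by the $T_{n+1}$-based variants \ref{thgs}(b)--(c), so that additional Gaussian randomness in the augmenting row and column becomes available to absorb the offending ratio.
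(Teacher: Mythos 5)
Your first paragraph reproduces the paper's own argument essentially verbatim: the Gohberg--Semencul formula of Theorem \ref{thgs}(a), the chain of norm inequalities through (\ref{eqttn}), $\|A\|=\|A^T\|$, $\|J{\bf v}\|_1=\|{\bf v}\|_1$, $\|Z{\bf v}\|_1\le\|{\bf v}\|_1$ and (\ref{eqnorm12}), arriving at $\|p_1T_n^{-1}\|\le 2n\|{\bf p}\|\,\|{\bf q}\|$ (this is exactly the paper's inequality (\ref{eqtpq})), with $\alpha=1/\|{\bf p}\|$ and $\beta=1/\|{\bf q}\|$. That reduction is correct and is the same as the paper's.

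The second half is where the substance of the theorem lies, and there your proposal does not close. The paper does not use Sherman--Morrison; it uses the geometric characterization from \cite{SST06}: ${\bf p}$ is orthogonal to $T_n{\bf e}_2,\dots,T_n{\bf e}_n$ with ${\bf p}^TT_n{\bf e}_1=1$, so the unit vector ${\bf u}={\bf p}/\|{\bf p}\|$ is determined by columns $2,\dots,n$ alone and $\alpha=1/\|{\bf p}\|=|{\bf u}^TT_n{\bf e}_1|$. Conditioning on columns $2,\dots,n$ fixes ${\bf u}$ and every entry of the first column except the corner entry $t_{n-1}$ (which occurs nowhere else in the matrix), so $|{\bf u}^TT_n{\bf e}_1|$ is affine in a single Gaussian, and Lemma \ref{leinp} via Remark \ref{reinp} is invoked to yield $F_{\alpha}(y)\le\sqrt{2n/\pi}\,y/\sigma$; symmetrically for $\beta$ with $t_{1-n}$ and columns $1,\dots,n-1$. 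Your Sherman--Morrison identity isolates the same single fresh Gaussian in an algebraically equivalent way, and the nuisance ratio $\|{\bf p}^{(0)}\|/|p_n^{(0)}|$ you flag is precisely $1/|u_n|$ in the paper's framework. You have therefore located the genuine crux, but you do not resolve it: a naive density estimate gives only $\sqrt{2/\pi}\,y/(\sigma|u_n|)$, and $|u_n|$ has no a priori lower bound; the paper's extra $\sqrt n$ amounts to the unproved assertion $|u_n|\ge 1/\sqrt n$ buried in its appeal to Remark \ref{reinp}. None of your three escape routes is carried out, and two of them do not correspond to the paper at all (the $\ell_1$-to-$\ell_2$ factor is already spent producing the $2n$ in (\ref{eqtpq}), and parts (b)--(c) of Theorem \ref{thgs} are used by the paper only afterwards, for the $T_{n+1}$ variants). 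As written, your argument establishes the reduction to (\ref{eqprtinv}) but not (\ref{eqprtinv}) itself, which is the actual content of the theorem.
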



\begin{proof}  
Recall from part (a) of Theorem  \ref{thgs} that
$p_{1}T_n^{-1}=Z({\bf p})Z(J{\bf q})^T-Z(Z{\bf q})Z(ZJ{\bf p})^T$.
Therefore
$||p_{1}T_n^{-1}||\le ||Z({\bf p})||~||Z(J{\bf q})^T||+||Z(Z{\bf q})||~||Z(ZJ{\bf p})^T||$
for ${\bf p}=T_n^{-1}{\bf e}_1$,  ${\bf q}=T_n^{-1}{\bf e}_n$, and $p_1={\bf p}^T{\bf e}_1$.
It follows that
$||p_{1}T_n^{-1}||\le ||Z({\bf p})||~||Z(J{\bf q})||+||Z(Z{\bf q})||~||Z(ZJ{\bf p})||$
since $||A||=||A^T||$ for all matrices $A$.
Furthermore
$||p_{1}T_n^{-1}||\le||{\bf p}||_1~||J{\bf q}||_1+||Z{\bf q}||_1~||ZJ{\bf p}||_1$
due to (\ref{eqttn}).
Clearly $||J{\bf v}||_1=||{\bf v}||_1$ and $||Z{\bf v}||_1\le ||{\bf v}||_1$
for every vector ${\bf v}$, and so (cf. (\ref{eqnorm12}))
\begin{equation}\label{eqtpq}
||p_{1}T_n^{-1}||\le 2 ||{\bf p}||_1~||{\bf q}||_1\le 2n ||{\bf p}||~||{\bf q}||.
\end{equation}
  
By definition the vector ${\bf p}$ is orthogonal
to the vectors $T_n{\bf e}_2,\dots,T_n{\bf e}_n$, 
whereas ${\bf p}^TT_n{\bf e}_1=1$ (cf. \cite{SST06}).
Consequenty the vectors $T_n{\bf e}_2,\dots,T_n{\bf e}_n$
uniquely define the vector 
 ${\bf u}={\bf p}/||{\bf p}||$,
whereas 
$|{\bf u}^TT_n{\bf e}_1|=1/||{\bf p}||$.
The last coordinate $t_{n-1}$ of the vector $T_n{\bf e}_1$
is independent of the vectors $T_n{\bf e}_2,\dots,T_n{\bf e}_n$
and consequently of the vector ${\bf u}$. 
Apply 
 Remark \ref{reinp} to estimate the cdf of the random 
variable $\alpha=1/||{\bf p}||=|{\bf u}^TT_n{\bf e}_1|$  
 and obtain that
$F_{\alpha}(y)\le  \sqrt{\frac{2n}{\pi}}\frac{y}{\sigma}$ for $y\ge 0$.

Likewise the $n-1$ column vectors $T{\bf e}_1,\dots,T_{n-1}$
 define the vector ${\bf v}=\beta{\bf q}$ for 
$\beta=1/||{\bf q}||=|{\bf v}^TT_n{\bf e}_n|$.
The first coordinate $t_{1-n}$ of the vector $T_n{\bf e}_n$
is independent of the vectors $T{\bf e}_1,\dots,T_{n-1}$
and consequently of the vector ${\bf v}$. 
Apply 
 Remark \ref{reinp} to
estimate the cdf of the random 
variable $\beta$ and obtain that
$F_{\beta}(y)\le  \sqrt{\frac{2n}{\pi}}\frac{y}{\sigma}$ for $y\ge 0$.
Finally combine these bounds on the cdfs $F_{\alpha}(y)$ and 
$F_{\beta}(y)$ with (\ref{eqtpq}).
\end{proof}
 

By applying parts (b) and (c)
of Theorem  \ref{thgs} instead of its part (a),
we similarly
deduce the
bounds $||v_0T_{n+1}^{-1}||\le 2\alpha\beta$ and
 $||v_nT_{n+1}^{-1}||\le 2\alpha\beta$
for two pairs of random variables $\alpha$ and $\beta$
that
satisfy (\ref{eqprtinv}) for $n+1$ replacing $n$.
We have  $p_{1}=\frac{\det T_{n-1}}{\det T_{n}}$,
$v_0=\frac{\det T_n}{\det T_{n+1}}$, and
$v_n=\frac{\det T_{0,1}}{\det T_{n+1}}$
for $T_{0,1}=(t_{i-j})_{i=0,j=1}^{n-1,n}$.
Next we bound the 
geometric means 
of the 
ratios 
$|\frac{\det T_{h+1}}{\det T_{h}}|$
for
$h=1,\dots,k-1$. 
 $1/|p_1|$ and  $1/|v_0|$
are such  ratios for $k=n-1$ and $k=n$,
respectively,
whereas the  ratio  $1/|v_n|$ is similar to 
$1/|v_0|$, under slightly distinct notation. 



\begin{theorem}\label{thhdmr} 
 Let $T_h\neq O$ denote $h\times h$ matrices
for $h=1,\dots,k$  
whose entries have absolute values at most $t$
for a fixed scalar or random variable $t$, e.g. for $t=||T||$.
Furthermore let $T_1=(t)$.
Then the geometric mean $(\prod_{h=1}^{k-1}|\frac{\det T_{h+1}}{\det T_{h}}|)^{1/(k-1)}=\frac{1}{t}|\det T_{k}|^{1/(k-1)}$
is at most $k^{\frac{1}{2}(1+\frac{1}{k-1})}t$.
\end{theorem}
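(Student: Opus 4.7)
The plan is to reduce the bound to the classical Hadamard inequality after recognizing the middle quantity as a telescoping product. The argument has essentially two independent pieces — an identity and an inequality — and neither is deep.

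First I would expand the product: consecutive numerators and denominators cancel in pairs, so
$$\prod_{h=1}^{k-1}\left|\frac{\det T_{h+1}}{\det T_h}\right| \;=\; \frac{|\det T_k|}{|\det T_1|} \;=\; \frac{|\det T_k|}{t},$$
using the hypothesis $T_1=(t)$. Taking the $(k-1)$st root gives the middle identity of the theorem.

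Next, to control the resulting quantity from above, I would apply Hadamard's inequality to the $k\times k$ matrix $T_k$. Since every entry of $T_k$ has absolute value at most $t$, each of its $k$ columns has Euclidean norm at most $t\sqrt k$. Hadamard's inequality then gives
$$|\det T_k| \;\le\; (t\sqrt k)^{k} \;=\; t^{k}\, k^{k/2}.$$

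Finally, substituting this bound into $(|\det T_k|/t)^{1/(k-1)}$ yields
$$(|\det T_k|/t)^{1/(k-1)} \;\le\; \bigl(t^{k-1} k^{k/2}\bigr)^{1/(k-1)} \;=\; t\cdot k^{k/(2(k-1))},$$
and the arithmetic identity $\tfrac{k}{2(k-1)} = \tfrac{1}{2}\bigl(1+\tfrac{1}{k-1}\bigr)$ puts the exponent into the form claimed. There is no real obstacle in this argument — the telescoping is immediate and Hadamard's inequality is standard; the only thing to watch is the exponent bookkeeping at the final step.
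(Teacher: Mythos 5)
Your proof is correct and follows essentially the same route as the paper: the paper's proof simply invokes Hadamard's bound $|\det M|\le k^{k/2}t^k$ for matrices with entries of absolute value at most $t$, which is exactly your column-norm estimate, and the telescoping identity and exponent arithmetic you supply are the (omitted but routine) remaining steps. No issues.
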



\begin{proof}
The theorem follows from 
Hadamard's upper bound
$|\det M|\le k^{k/2}t^k$, which holds
for any $k\times k$ matrix $M=(m_{i,j})_{i,j=1}^k$
with $\max_{i,j=1}^k|m_{i,j}|\le t$.
\end{proof}
 
The theorem says that
the geometric mean of the ratios $|\det T_{h+1}/\det T_{h}|$
for 
$h=1,\dots,k-1$
 is not greater than $k^{0.5+\epsilon(k)}t$
where $\epsilon(k)\rightarrow 0$ as $k\rightarrow \infty$.
Furthermore if
$T_n\in \mathcal T_{\mu,\sigma}^{n\times n}$
we can write
$t=||T||$  
 and 
 apply (\ref{eqtn}) to bound the cdf of $t$. 


\section{Extremal singular values of Gaussian random circulant matrices}\label{scgrcm}


Next we estimate the norms of a random Gaussian $f$-circulant matrix 
and its inverse. 


\begin{theorem}\label{thcircsing}
Assume $y\ge 0$ and a circulant $n\times n$ matrix $T=Z_1({\bf v})$ 
for ${\bf v}\in \mathcal G_{\mu,\sigma}^{n\times 1}$. Then
 
(a) $F_{||T||}(y)\ge \chi_{\mu,\sigma,n} (\sqrt {\frac{2}{n}}y)$
for  $\chi_{\mu,\sigma,n}(y)$ in Definition \ref{defchi} and
(b) $F_{1/||T^{-1}||}(y)\le \sqrt{\frac{2}{\pi}} \frac{ny}{\sigma}$.
\end{theorem}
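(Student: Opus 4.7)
The plan is to diagonalize $T$ by the discrete Fourier transform and reduce both inequalities to statements about $\Omega{\bf v}$. By Theorem \ref{thcpw} applied with $f=1$, $T=Z_1({\bf v})=\Omega^{-1}D(\Omega{\bf v})\Omega$; and because $\Omega/\sqrt{n}$ is unitary, $T$ is a normal matrix whose singular values coincide with the moduli of its eigenvalues $\lambda_k=(\Omega{\bf v})_k$, $k=0,\dots,n-1$. Hence
\[
||T||=\max_{0\le k\le n-1}|\lambda_k|,\qquad 1/||T^{-1}||=\min_{0\le k\le n-1}|\lambda_k|,
\]
so it suffices to control the extremes of the $|\lambda_k|$.

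For part (a) I would invoke Parseval's identity $\sum_{k=0}^{n-1}|\lambda_k|^2=n\,||{\bf v}||^2$, together with the conjugate symmetry $\lambda_k=\overline{\lambda_{n-k}}$ forced by realness of ${\bf v}$. For every frequency $k\notin\{0,n/2\}$ the value $|\lambda_k|^2$ is counted at least twice in the Parseval sum, so the maximizer $k^*$ satisfies $2|\lambda_{k^*}|^2\le n\,||{\bf v}||^2$, giving the deterministic bound $||T||\le\sqrt{n/2}\,||{\bf v}||$. The inclusion $\{||{\bf v}||\le\sqrt{2/n}\,y\}\subseteq\{||T||\le y\}$ then yields $F_{||T||}(y)\ge F_{||{\bf v}||}(\sqrt{2/n}\,y)=\chi_{\mu,\sigma,n}(\sqrt{2/n}\,y)$, as asserted.

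For part (b) I would run a union bound, $F_{1/||T^{-1}||}(y)=\prob\{\min_k|\lambda_k|\le y\}\le\sum_k\prob\{|\lambda_k|\le y\}$. For each $k$, write $\lambda_k={\bf c}_k^T{\bf v}+i\,{\bf s}_k^T{\bf v}$ with ${\bf c}_k=(\cos(2\pi kj/n))_j$ and ${\bf s}_k=(\sin(2\pi kj/n))_j$; then $|\lambda_k|\ge|{\bf c}_k^T{\bf v}|$, and Lemma \ref{leinp} applied to the unit vector ${\bf c}_k/||{\bf c}_k||$ gives $\prob\{|{\bf c}_k^T{\bf v}|\le y\}\le\sqrt{2/\pi}\,y/\sigma$ (indeed a factor $||{\bf c}_k||^{-1}$ smaller, so the crude bound is more than enough). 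Summing these $n$ estimates produces the claimed $\sqrt{2/\pi}\,ny/\sigma$.

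The main obstacle is part (a): the pairing argument does not save a factor of two at the two real modes $k=0$ and $k=n/2$, where a priori $|\lambda_k|^2$ could reach $n\,||{\bf v}||^2$. I would handle these two exceptional frequencies either by a direct estimate exploiting the Gaussian laws of $\sum_j v_j$ and $\sum_j(-1)^j v_j$ (each a single scalar linear combination of the entries of ${\bf v}$), or by absorbing them into a slightly enlarged constant via the observation that they are measure-zero contributors to the supremum when one perturbs ${\bf v}$ by an independent continuous noise. A secondary point is that Lemma \ref{leinp} is stated for real inner products, which is why projecting onto the real part ${\bf c}_k^T{\bf v}$ in part (b) is what lets the lemma be invoked directly.
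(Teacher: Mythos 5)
Your part (b) is essentially the paper's own proof: diagonalize $T$ by $\Omega$ via Theorem \ref{thcpw}, note that $1/||T^{-1}||=\min_i|u_i|$ for ${\bf u}=\Omega{\bf v}$, drop to the real part $\Re(u_i)$ (a real inner product of ${\bf v}$ with a cosine vector of norm at least $1$), apply Lemma \ref{leinp}, and union-bound over the $n$ frequencies. That half is correct and matches the paper.

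Part (a) is where you diverge, and the gap you flag is fatal as formulated. The deterministic inequality $||T||\le\sqrt{n/2}\,||{\bf v}||$ is false: for ${\bf v}=(1,\dots,1)^T$ the matrix $T$ is the all-ones matrix, so $||T||=n=\sqrt n\,||{\bf v}||$. The exceptional frequencies $k=0$ and $k=n/2$ are not a measure-zero nuisance: the event $|\lambda_0|>\sqrt{n/2}\,||{\bf v}||$ has positive probability, and for $\mu\neq 0$ it is the \emph{typical} event, since $\lambda_0=\sum_jv_j\approx n\mu$ while $\sqrt{n/2}\,||{\bf v}||\approx n|\mu|/\sqrt2$. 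So neither of your proposed repairs (a separate Gaussian estimate for the two real modes, or absorbing them as negligible) can recover the claimed constant. The paper proves (a) by a cruder, non-Fourier route: it writes $T=Z({\bf t})+Z({\bf t}_-)^T$ as in (\ref{eqt2tt}), observes that for a circulant both pieces are built from the entries of ${\bf v}$ so that $||T||\le 2||{\bf v}||_1\le 2\sqrt n\,||{\bf v}||$ by (\ref{eqttn}) and (\ref{eqnorm12}), and then passes to the cdf exactly as in (\ref{eqtn}). Be aware that this yields only $F_{||T||}(y)\ge\chi_{\mu,\sigma,n}(y/(2\sqrt n))$, i.e.\ the printed constant $\sqrt{2/n}$ is not actually delivered by the paper's argument either, and your all-ones example shows no deterministic bound better than $||T||\le\sqrt n\,||{\bf v}||$ is possible. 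If you keep your Fourier approach but drop the pairing, Parseval alone gives $\max_k|\lambda_k|\le(\sum_k|\lambda_k|^2)^{1/2}=\sqrt n\,||{\bf v}||$, hence $F_{||T||}(y)\ge\chi_{\mu,\sigma,n}(y/\sqrt n)$ --- a clean and correct statement that is in fact sharper than what the paper's own proof establishes, though still short of the printed claim.
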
 


\begin{proof}
For the matrix $T=Z_1({\bf v})$
we have both equation (\ref{eqt2tt}) and the bound 
$||{\bf t_-}||_1\le||{\bf t}||_1$,
and so $||T||_1\le 2||{\bf t}||_1$. Now
part (a) of the theorem follows similarly to (\ref{eqtn}).
To prove part (b)
recall
 Theorem \ref{thcpw} and write 
$B=\Omega T\Omega^{-1}=D({\bf u})$,
${\bf u}=(u_i)_{i=0}^{n-1}=\Omega {\bf v}$. We have
$\sigma_j(T)=\sigma_j(B)$ for all $j$ because
$\frac{1}{\sqrt n}\Omega$ 
and $\sqrt n\Omega^{-1}$ are unitary matrices.
By combining the equations $u_i={\bf e}_i^T\Omega{\bf v}$, the bounds 
$||\Re ({\bf e}_i^T\Omega)||\ge 1$ for all $i$,
and Lemma \ref{leinp}, deduce that 
$F_{|\Re (u_i)|}(y)\le  \sqrt{\frac{2}{\pi}} \frac{y}{\sigma}$
for $i=1,\dots,n$.   
We have  $F_{\sigma_n(B)}(y)=F_{\min_i|u_i|}(y)$ because 
$B=\diag(u_i)_{i=0}^{n-1}$, and
clearly $|u_i|\ge |\Re (u_i)|$.
\end{proof}




\begin{remark}\label{retcond}
Our extensive experiments suggest that 
the estimates of Theorem \ref{thcircsing} are  overly pessimistic
 (cf. Table \ref{tabcondcirc}).
\end{remark}


Combining Theorem  \ref{thcpw} with minimax property (\ref{eqminmax}) implies that 
$$\frac{1}{g(f)}\sigma_j(Z_1({\bf v}))\le \sigma_j(Z_f({\bf v}))\le g(f) \sigma_j(Z_1({\bf v}))$$
for all vectors ${\bf v}$, scalars $f\neq 0$,
$g(f)=\max\{|f|^2,{1/|f|^2}\}$, and $j=1,\dots,n$. Thus we can readily extend
the estimates of Theorem \ref{thcircsing} to $f$-circulant matrices for $f\neq 0$.
In particular Gaussian random $f$-circulant matrices 
 tend to be 
well conditioned unless  $f\approx 0$ or $1/f\approx 0$.


\section{Numerical Experiments}\label{sexp}

 
Our numerical experiments with random general, Hankel, Toeplitz and circulant matrices 
have been performed in the Graduate Center of the City University of New York 
on a Dell server with a dual core 1.86 GHz
Xeon processor and 2G memory running Windows Server 2003 R2. The test
Fortran code was compiled with the GNU gfortran compiler within the Cygwin
environment.  Random numbers were generated with the random\_number
intrinsic Fortran function, assuming the uniform probability distribution 
over the range $\{x:-1 \leq x < 1\}$.  The tests have been designed by the first author 
and performed by his coauthors.


We have computed the condition numbers of random general $n\times n$ matrices for 
$n=2^k$, $k=5,6,\dots,$ with entries sampled in the range $[-1,1)$ as well as 
complex general, Toeplitz, and circulant matrices 
whose entries had real and imaginary parts sampled at random in the same range $[-1,1)$. 
We performed 100 tests for each class of inputs, each dimension $n$,  and each nullity $r$.
 Tables \ref{tab01}--\ref{tabcondcirc} display
 the test results. The last four columns of each table 
display the average (mean), minimum, maximum, and standard deviation
of the computed condition numbers of the input matrices, respectively. Namely we 
computed
the values $\kappa (A)=||A||~||A^{-1}||$ for general, Toeplitz, and circulant matrices $A$ and
the values $\kappa_1 (A)=||A||_1~||A^{-1}||_1$ for Toeplitz matrices $A$.
We computed and displayed in Table \ref{tabcondtoep} the 1-norms of 
Toeplitz matrices and their inverses rather than their 2-norms 
to facilitate the computations in the case of inputs of large sizes.
Relationships (\ref{eqnorm12}) link 
 the 1-norms and 2-norms to one another, but 
the empirical data in 
Table \ref{nonsymtoeplitz} consistently show 
even  closer links,
in all cases of
general, Toeplitz, and circulant  $n\times n$ matrices $A$ where
$n=32,64,\dots, 1024$. 


\begin{table}[h]
\caption{The norms of random general, Toeplitz and circulant $n\times n$ matrices and of their inverses}
\label{nonsymtoeplitz}
 \begin{center}
\begin{tabular}{|c|c|c|c|c|c|c|c|}
\hline
\textbf{matrix $A$}&\textbf{$n$}&\textbf{$||A||_1$}&\textbf{$||A||_2$}&\textbf{$\frac{||A||_1}{||A||_2}$}&\textbf{$||A^{-1}||_1$}&\textbf{$||A^{-1}||_2$}&\textbf{$\frac{||A^{-1}||_1}{||A^{-1}||_2}$}\\\hline
General & $32$ & $1.9\times 10^{1}$ & $1.8\times 10^{1}$ & $1.0\times 10^{0}$ & $4.0\times 10^{2}$ & $2.1\times 10^{2}$ & $1.9\times 10^{0}$ \\ \hline
General & $64$ & $3.7\times 10^{1}$ & $3.7\times 10^{1}$ & $1.0\times 10^{0}$ & $1.2\times 10^{2}$ & $6.2\times 10^{1}$ & $2.0\times 10^{0}$ \\ \hline
General & $128$ & $7.2\times 10^{1}$ & $7.4\times 10^{1}$ & $9.8\times 10^{-1}$ & $3.7\times 10^{2}$ & $1.8\times 10^{2}$ & $2.1\times 10^{0}$ \\ \hline
General & $256$ & $1.4\times 10^{2}$ & $1.5\times 10^{2}$ & $9.5\times 10^{-1}$ & $5.4\times 10^{2}$ & $2.5\times 10^{2}$ & $2.2\times 10^{0}$ \\ \hline
General & $512$ & $2.8\times 10^{2}$ & $3.0\times 10^{2}$ & $9.3\times 10^{-1}$ & $1.0\times 10^{3}$ & $4.1\times 10^{2}$ & $2.5\times 10^{0}$ \\ \hline
General & $1024$ & $5.4\times 10^{2}$ & $5.9\times 10^{2}$ & $9.2\times 10^{-1}$ & $1.1\times 10^{3}$ & $4.0\times 10^{2}$ & $2.7\times 10^{0}$ \\ \hline
Toeplitz & $32$ & $1.8\times 10^{1}$ & $1.9\times 10^{1}$ & $9.5\times 10^{-1}$ & $2.2\times 10^{1}$ & $1.3\times 10^{1}$ & $1.7\times 10^{0}$ \\ \hline
Toeplitz & $64$ & $3.4\times 10^{1}$ & $3.7\times 10^{1}$ & $9.3\times 10^{-1}$ & $4.6\times 10^{1}$ & $2.4\times 10^{1}$ & $2.0\times 10^{0}$ \\ \hline
Toeplitz & $128$ & $6.8\times 10^{1}$ & $7.4\times 10^{1}$ & $9.1\times 10^{-1}$ & $1.0\times 10^{2}$ & $4.6\times 10^{1}$ & $2.2\times 10^{0}$ \\ \hline
Toeplitz & $256$ & $1.3\times 10^{2}$ & $1.5\times 10^{2}$ & $9.0\times 10^{-1}$ & $5.7\times 10^{2}$ & $2.5\times 10^{2}$ & $2.3\times 10^{0}$ \\ \hline
Toeplitz & $512$ & $2.6\times 10^{2}$ & $3.0\times 10^{2}$ & $8.9\times 10^{-1}$ & $6.9\times 10^{2}$ & $2.6\times 10^{2}$ & $2.6\times 10^{0}$ \\ \hline
Toeplitz & $1024$ & $5.2\times 10^{2}$ & $5.9\times 10^{2}$ & $8.8\times 10^{-1}$ & $3.4\times 10^{2}$ & $1.4\times 10^{2}$ & $2.4\times 10^{0}$ \\ \hline   
Circulant & $32$ & $1.6\times 10^{1}$ & $1.8\times 10^{1}$ & $8.7\times 10^{-1}$ & $9.3\times 10^{0}$ & $1.0\times 10^{1}$ & $9.2\times 10^{-1}$ \\ \hline
Circulant & $64$ & $3.2\times 10^{1}$ & $3.7\times 10^{1}$ & $8.7\times 10^{-1}$ & $5.8\times 10^{0}$ & $6.8\times 10^{0}$ & $8.6\times 10^{-1}$ \\ \hline
Circulant & $128$ & $6.4\times 10^{1}$ & $7.4\times 10^{1}$ & $8.6\times 10^{-1}$ & $4.9\times 10^{0}$ & $5.7\times 10^{0}$ & $8.5\times 10^{-1}$ \\ \hline
Circulant & $256$ & $1.3\times 10^{2}$ & $1.5\times 10^{2}$ & $8.7\times 10^{-1}$ & $4.7\times 10^{0}$ & $5.6\times 10^{0}$ & $8.4\times 10^{-1}$ \\ \hline
Circulant & $512$ & $2.6\times 10^{2}$ & $3.0\times 10^{2}$ & $8.7\times 10^{-1}$ & $4.5\times 10^{0}$ & $5.4\times 10^{0}$ & $8.3\times 10^{-1}$ \\ \hline
Circulant & $1024$ & $5.1\times 10^{2}$ & $5.9\times 10^{2}$ & $8.7\times 10^{-1}$ & $5.5\times 10^{0}$ & $6.6\times 10^{0}$ & $8.3\times 10^{-1}$ \\ \hline
\end{tabular}
\end{center}
\end{table}


\begin{table}[h]
  \caption{The condition numbers $\kappa (A)$ of random $n\times n$ matrices $A$}
  \label{tab01}
  \begin{center}
    \begin{tabular}{| c | c | c | c | c | c |}
    \hline
\bf{$n$}&\bf{input} & \bf{min}  &\bf{max} &\bf{mean} &\bf{std} \\ \hline

$ 32   $ & $ {\rm real} $ & $2.4\times 10^{1}$ & $1.8\times 10^{3}$ & $2.4\times 10^{2}$ & $3.3\times 10^{2}$ \\ \hline
$ 64   $ & $ {\rm real} $ & $4.6\times 10^{1}$ & $1.1\times 10^{4}$ & $5.0\times 10^{2}$ & $1.1\times 10^{3}$ \\ \hline
$ 128  $ & $ {\rm real} $ & $1.0\times 10^{2}$ & $2.7\times 10^{4}$ & $1.1\times 10^{3}$ & $3.0\times 10^{3}$ \\ \hline
$ 256  $ & $ {\rm real} $ & $2.4\times 10^{2}$ & $8.4\times 10^{4}$ & $3.7\times 10^{3}$ & $9.7\times 10^{3}$ \\ \hline
$ 512  $ & $ {\rm real} $ & $3.9\times 10^{2}$ & $7.4\times 10^{5}$ & $1.8\times 10^{4}$ & $8.5\times 10^{4}$ \\ \hline
$ 1024 $ & $ {\rm real} $ & $8.8\times 10^{2}$ & $2.3\times 10^{5}$ & $8.8\times 10^{3}$ & $2.4\times 10^{4}$ \\ \hline
$ 2048 $ & $ {\rm real} $ & $2.1\times 10^{3}$ & $2.0\times 10^{5}$ & $1.8\times 10^{4}$ & $3.2\times 10^{4}$ \\ \hline
    \end{tabular}
  \end{center}
\end{table}

\begin{table}[h]
\caption{The condition numbers $\kappa_1 (A)=\frac{||A||_1}{||A^{-1}||_1}$ of random Toeplitz 
 $n\times n$ matrices $A$}
\label{tabcondtoep}
\begin{center}
\begin{tabular}{|c|c|c|c|c|}
\hline
\textbf{$n$}&\textbf{min}&\textbf{mean}&\textbf{max}&\textbf{std}\\\hline
$256$ & $9.1\times 10^{2}$ & $9.2\times 10^{3}$ & $1.3\times 10^{5}$ & $1.8\times 10^{4}$  \\ \hline
$512$ & $2.3\times 10^{3}$ & $3.0\times 10^{4}$ & $2.4\times 10^{5}$ & $4.9\times 10^{4}$  \\ \hline
$1024$ & $5.6\times 10^{3}$ & $7.0\times 10^{4}$ & $1.8\times 10^{6}$ & $2.0\times 10^{5}$ \\ \hline
$2048$ & $1.7\times 10^{4}$ & $1.8\times 10^{5}$ & $4.2\times 10^{6}$ & $5.4\times 10^{5}$ \\ \hline
$4096$ & $4.3\times 10^{4}$ & $2.7\times 10^{5}$ & $1.9\times 10^{6}$ & $3.4\times 10^{5}$ \\ \hline
$8192$ & $8.8\times 10^{4}$ & $1.2\times 10^{6}$ & $1.3\times 10^{7}$ & $2.2\times 10^{6}$ \\ \hline
\end{tabular}
\end{center}
\end{table}

\begin{table}[h]
\caption{The condition numbers $\kappa (A)$ of random circulant  $n\times n$ matrices $A$}
\label{tabcondcirc}
\begin{center}
\begin{tabular}{|c|c|c|c|c|}
\hline
\textbf{$n$}&\textbf{min}&\textbf{mean}&\textbf{max}&\textbf{std}\\\hline
$256$ & $9.6\times 10^{0}$ & $1.1\times 10^{2}$ & $3.5\times 10^{3}$ & $4.0\times 10^{2}$ \\ \hline
$512$ & $1.4\times 10^{1}$ & $8.5\times 10^{1}$ & $1.1\times 10^{3}$ & $1.3\times 10^{2}$ \\ \hline
$1024$ & $1.9\times 10^{1}$ & $1.0\times 10^{2}$ & $5.9\times 10^{2}$ & $8.6\times 10^{1}$ \\ \hline
$2048$ & $4.2\times 10^{1}$ & $1.4\times 10^{2}$ & $5.7\times 10^{2}$ & $1.0\times 10^{2}$ \\ \hline
$4096$ & $6.0\times 10^{1}$ & $2.6\times 10^{2}$ & $3.5\times 10^{3}$ & $4.2\times 10^{2}$ \\ \hline
$8192$ & $9.5\times 10^{1}$ & $3.0\times 10^{2}$ & $1.5\times 10^{3}$ & $2.5\times 10^{2}$ \\ \hline
$16384$ & $1.2\times 10^{2}$ & $4.2\times 10^{2}$ & $3.6\times 10^{3}$ & $4.5\times 10^{2}$ \\ \hline
$32768$ & $2.3\times 10^{2}$ & $7.5\times 10^{2}$ & $5.6\times 10^{3}$ & $7.1\times 10^{2}$ \\ \hline
$65536$ & $2.4\times 10^{2}$ & $1.0\times 10^{3}$ & $1.2\times 10^{4}$ & $1.3\times 10^{3}$ \\ \hline
$131072$ & $3.9\times 10^{2}$ & $1.4\times 10^{3}$ & $5.5\times 10^{3}$ & $9.0\times 10^{2}$ \\ \hline
$262144$ & $6.3\times 10^{2}$ & $3.7\times 10^{3}$ & $1.1\times 10^{5}$ & $1.1\times 10^{4}$ \\ \hline
$524288$ & $8.0\times 10^{2}$ & $3.2\times 10^{3}$ & $3.1\times 10^{4}$ & $3.7\times 10^{3}$ \\ \hline
$1048576$ & $1.2\times 10^{3}$ & $4.8\times 10^{3}$ & $3.1\times 10^{4}$ & $5.1\times 10^{3}$ \\ \hline   
\end{tabular}
\end{center}
\end{table}

\section{Implicit empirical support of the estimates of 
Sections \ref{scgrtm} and  \ref{scgrcm}}\label{simprel}

The papers \cite{PQa} and \cite{PQZa} describe successful
 applications of randomized 
circulant and Toeplitz multipliers to some fundamental matrix computations.
These applications were bound to fail if the multipliers were ill conditioned,
and so the success
gives some implicit empirical support to our probabilistic estimates of 
Sections \ref{scgrtm} and  \ref{scgrcm} and motivates the effort
for proving these estimates.

Namely it is well known that Gaussian elimination with no pivoting fails numerically
where the input matrix has an ill conditioned leading block,
even if the matrix itself is nonsingular and well conditioned.
In our extensive tests in \cite{PQa} and \cite{PQZa} 
we consistently fixed this problem by means of multiplication
by random circulant matrices. This implies 
that the random circulant matrices tend to be nonsingular and well conditioned
for otherwise the products  would be singular or ill conditioned.

Likewise in other tests in \cite{PQZa} the column sets of 
the products $A^TG$ of an $n\times m$ matrix $A^T$ having a numerical
rank $\rho$ by 
random Toeplitz $m\times \rho$ multipliers consisently  
approximated some bases for the singular spaces   
associated with the $\rho$ largest singular vaues of the matrix $A$,
and this was readily extended to computing a rank-$\rho$  approximation 
of the matrix $A$,
which is a fundamental task of matrix computations
\cite{HMT11}. Then again one can immediately observe that
 these tests would have failed numerically
if the multipliers and consequently the products were ill conditioned.

\section{Conclusions}\label{srel}


Estimating the condition numbers of random structured matrices
is a well known challenge (cf.  \cite{SST06}).
We deduce such estimates for Gaussian random Toeplitz and circulant
matrices.
The former estimates can be surprising because
the condition numbers
 grow exponentially in $n$ as $n\rightarrow \infty$
for some large and important classes
of  $n\times n$ 
Toeplitz matrices \cite{BG05}, 
whereas we prove the opposit for 
Gaussian random Toeplitz matrices.
Our formal estimates are in good accordance 
with our numerical tests,
except that circulant matrices tended to be even
better conditioned in the tests than according to our formal
study.  
The study of the condition number of Hankel matrices
is immediately reduced to the study for Toeplitz matrices
and vice versa. Can
our progress be extended to other important classes
of structured matrices?

$~$

{\bf Acknowledgements:}
Our research has been supported by NSF Grant CCF--1116736 and
PSC CUNY Awards 64512--0042 and 65792--0043.


\end{document}